\def\centerarc[#1](#2)(#3:#4:#5)% Syntax: [draw options] (center) (initial angle:final angle:radius)
\pgfplotsset{compat=1.6}
\pgfplotsset{soldot/.style={color=blue,only marks,mark=*}} 
\pgfplotsset{holdot/.style={color=blue,fill=white,only marks,mark=*}}
\tikzset{->-/.style={decoration={
  markings,
  mark=at position #1 with {\arrow{>}}},postaction={decorate}}}
 \tikzset{-<-/.style={decoration={
  markings,
  mark=at position #1 with {\arrow{<}}},postaction={decorate}}}
\theoremstyle{plain}
\newtheorem{Thm}{Theorem}%[section]
\newtheorem{Lem}[Thm]{Lemma}
\newtheorem{Prop}[Thm]{Proposition}
\theoremstyle{definition}
\newtheorem{Def}[Thm]{Definition}
\newtheorem{Rmk}[Thm]{Remark}
\theoremstyle{remark}
\def\ZZ{{\mathbb Z}}
\def\alp{0.54}
\begin{document}

%Topmatter 

%One author
\title[A tale of two pseudo-Anosov maps]{Distinctness of two pseudo-Anosov maps}

\author{Thomas A. Schmidt}
\address{Oregon State University\\Corvallis, OR 97331}
\email{toms@math.orst.edu}

 \author{Mesa Walker }
\address{Oregon State University\\Corvallis, OR 97331}
\email{walkemes@oregonstate.edu}

\keywords{pseudo-Anosov, Lefschetz functions, Alexander polynomials}
\subjclass[2020]{57K20,  (37E30)}
\date{16 September 2024}

%End topmatter

\begin{abstract}  In 1981, Arnoux and Yoccoz gave the first examples of pseudo-Anosov maps with odd degree stretch factors.  In 1985,  D.~Fried deduced the existence of a pseudo-Anosov map in genus three with the same stretch factor as the Arnoux-Yoccoz example in that genus, and asked if these were the same.    We show that they are distinct.    We do this by, in a sense, reversing Fried's construction; we show that the mapping torus of the pseudo-Anosov map induced by the Arnoux-Yoccoz map on the surface obtained by blowing-up its two singularities  has  no cross section which is a torus with two points blown-up.   
\end{abstract}

\maketitle

\tableofcontents

\section{Introduction}  Thurston classified surface homeomorphisms into three classes, with the pseudo-Anosov homeomorphisms the main case of interest.   As we recall below, associated to a pseudo-Anosov homeomorphism is an algebraic integer, its stretch factor.   In the classic reference \cite{FLP} for Thurston's work on surface homeomorphisms, published in 1979,   the authors indicate that they had not yet seen a pseudo-Anosov map whose stretch factor is of algebraic degree greater than two.  
 In 1981, Arnoux and Yoccoz \cite{AY} gave examples of pseudo-Anosov maps with stretch factors of  degree $g$ for each $g\ge 2$.     In a recent paper, Leichti-Strenner  \cite{LiechtiStrenner} write that this family of examples is ``probably the single most widely studied" of the pseudo-Anosov maps, see \cite{ABB,  Bowman,  HLM, HooperWeiss, LPV, McPoly, Str} for some of this work.

D.~Fried, in a series of influential papers including \cite{Fr79, Fr82GeomCrossTop, Fr82FlowEqui},    used techniques of algebraic topology to study the dynamics of flows and especially of flows in 3-manifolds.    
 As an illustration of some of this,   Fried \cite{Fr85}   in 1985 deduced the existence of a pseudo-Anosov map in genus three with the same stretch factor as the Arnoux-Yoccoz example in that genus, and asked if these were the same.       We show that they are in fact distinct.

Whereas Arnoux-Yoccoz used methods  of interval exchange transformations,  Fried's approach gives the surface homeomorphism by blowing down the boundary circles of a particular cross section of the suspension flow on the mapping torus of the  map, on the square torus blown up at two points, induced by a specific toral automorphism.     If the two pseudo-Anosov maps were the same, then one could reverse Fried's procedure.  That is, the square torus with two points blown-up would appear as a cross section to the suspension flow on the mapping torus of the map induced by the Yoccoz-Arnoux  map on the blow-up of the genus three surface at the two singular points of the Yoccoz-Arnoux map.  We show that no cross section of this latter suspension flow has the Euler characteristic of a torus blow-up at two points, and thus deduce that the pseudo-Anosov maps of Arnoux-Yoccoz and of Fried cannot be the same.\\

%-----------------------
\begin{Thm}\label{t:main}   The two pseudo-Anosov maps in genus three with stretch factor equal to the largest zero of $x^3-x^2-x-1$   determined by Fried and by Arnoux-Yoccoz are distinct.   In particular,  there is no cross section to the suspension flow on the mapping torus of the blow-up of the Arnoux-Yoccox surface at its two singularities whose Euler characteristic equals to $-2$.
\end{Thm}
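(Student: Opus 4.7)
The plan is to establish the second assertion of the theorem, since as explained in the introduction this immediately implies the distinctness of the two pseudo-Anosov maps. Write $\phi \colon S \to S$ for the homeomorphism induced by the Arnoux--Yoccoz pseudo-Anosov on the real blow-up $S$ of the genus three surface at its two singular points, and let $M = S_\phi$ denote the mapping torus. What must be shown is that no cross section to the suspension flow on $M$ has Euler characteristic $-2$.

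The central tool is Fried's cross-section theorem, the very one used in \cite{Fr85}: cross sections to the suspension flow on $M$ are in bijection with primitive integer classes in the interior of the cone $\mathcal{C}$ over a distinguished fibered face $F$ of the Thurston norm ball in $H^1(M;\mathbb R)$, and the Euler characteristic of the cross section Poincar\'e-dual to $\alpha \in \mathcal{C}$ equals $-\|\alpha\|_T$. Since the Thurston norm is linear on $F$, the task becomes the arithmetic one of identifying this linear form and verifying that it does not take the value $2$ on any integer class in $\mathcal{C}$. Concretely I would: (i) compute $H^1(M;\mathbb Z)$ and locate $\mathcal{C}$ inside it via the Wang sequence for the mapping torus and the induced action of $\phi_\ast$ on $H_1(S;\mathbb Z)$; (ii) extract the multivariable Alexander polynomial of $M$ from the periodic-orbit structure of the suspension flow by Fried's Lefschetz-function formalism; and (iii) read off $\|\cdot\|_T$ on $F$ from the Newton polytope of this polynomial, invoking McMullen's identification of the Alexander and Thurston norms on a fibered face, with the appropriate boundary correction. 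The two invariants highlighted in the paper's keywords thus serve as the bridge between the dynamics of $\phi$ and the Thurston norm on $M$.

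The principal obstacle is the final quantitative step. The canonical fibration class has norm $|\chi(S)| = 6$, but ruling out $\|\alpha\|_T = 2$ for \emph{every} integer $\alpha \in \mathcal{C}$ is more delicate than any mere inequality: one must know the linear form representing $\|\cdot\|_T$ on $F$ exactly, and must also describe precisely which primitive integer classes in $H^1(M;\mathbb Z)$ lie in the open fibered cone rather than on its boundary or outside it. Carrying the Lefschetz-function computation through to the explicit Alexander polynomial of the Arnoux--Yoccoz monodromy, and then performing the ensuing lattice-point verification, is the real work of the argument; once accomplished, no cross section of $M$ can be a torus with two points blown up, and the distinctness of the Fried and Arnoux--Yoccoz pseudo-Anosovs follows by reversing Fried's construction.
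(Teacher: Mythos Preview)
Your strategy coincides with the paper's: reduce to the second assertion, compute $H_1(M;\mathbb Z)$ from the Wang sequence and the action of $\hat h_*$, evaluate Fried's multivariable Lefschetz zeta function $\zeta_{Q,\hat h}$, and then check that no admissible integral specialization has degree $2$. The paper even discusses (in its \S2.6) the McMullen/Newton-polytope reformulation you invoke, though in the end it works directly with Fried's Theorem---the degree of the $u$-specialization of $\zeta_{Q,\hat h}$ equals $-\chi(K_u)$---rather than routing through the Alexander/Thurston norm comparison and its boundary correction.

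What you have written, however, is a plan and not a proof: you explicitly defer the computation that constitutes, in your own words, ``the real work of the argument.'' The paper carries it out. One finds $\mathrm{Coker}(\hat h_*-\mathrm{Id})\cong\mathbb Z/2\mathbb Z\oplus\mathbb Z$, so $Q\cong\mathbb Z=\langle\nu\rangle$ and $G\cong\mathbb Z^2$; the zeta function comes out as
\[
\zeta_{Q,\hat h}(\tau)=1-\nu\tau^2-4\nu\tau^3-\nu\tau^4+\nu^2\tau^6,
\]
and the specialization under $(\nu,\tau)\mapsto(t^a,t^b)$ is $p_{a,b}(t)=t^{2a+6b}-t^{a+4b}-4t^{a+3b}-t^{a+2b}+1$. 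The positivity condition for a cross section forces $b>0$ and $a+2b>0$, and from these one checks $\deg p_{a,b}=2a+6b\ge 4$ in every case, so $-\chi(K)\ge 4$. Until that explicit polynomial is in hand and the lattice check is done, your outline is correct in spirit but not yet a proof.
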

%-----------------------

 We follow Fried's recipe for determining rough aspects of cross sections to the suspension flow in the mapping torus of a pseudo-Anosov map,   in that we: (1)  blow-up the singularities on the Arnoux-Yoccoz surface; (2) compute Fried's multivariable Lefschetz function for the suspension flow on the mapping torus of the induced pseudo-Anosov map; (3) determine the specialization polynomials, each of which has degree giving the absolute value of the Euler characteristic of a corresponding section (and whose leading zero is the stretch factor of the pseudo-Anosov map given by the return map to the cross section).  In Lemma~\ref{l:noMinTwo} we use that information to show that there is no cross section whose  Euler characteristic equals to $-2$, and thus the theorem holds.

\subsection{Thanks}  It is a pleasure to thank Saul Schleimer and Erwan Lanneau for related conversion, as well as various participants of the S\'eminaire Rauzy  in Marseille for stimulating questions.     This paper is based upon the  Oregon State University PhD dissertation of the second-named author. 
 
 \section{Background}
 
 \subsection{Pseudo-Anosov maps as affine diffeomorphisms on translation surfaces}  The main case of the Dehn-Bers-Thurston classification of surface homeomorphisms is  that of {\em pseudo-Anosov} maps, see  \cite{Hteich2} and \cite{Lanneau} for much of the following.   Suppose that $S$ is an orientable closed real surface   of genus $g\ge 2$.    A homeomorphism  $f : S \to S$ is called {\em pseudo-Anosov}
if there exists   a pair of invariant transverse measured (singular) foliations
$(\mathcal F^u, \mu^u), (\mathcal F^s, \mu^s)$  and a real number $\lambda>1$ such that $f$ multiplies
the transverse measure $\mu^u$ (resp. $ \mu^s$) by $\lambda$ (resp. $\lambda^{-1}$). The real number $\lambda$, which Thurston \cite{ThurstonPA} showed is always an algebraic integer,  is called the {\em stretch factor} of the {\em pseudo-Anosov homeomorphism} $f$.        One can extend this definition to the case  with boundary,  see  Boyland's \cite{Boyland2} discussion of  `standard models', where now the restriction is that the Euler characteristic of $S$ be negative.

A pseudo-Anosov homeomorphism  $f$ is called {\em orientable} if  $\mathcal F^u$ or $\mathcal F^s$ is orientable  --- that is,  leaves can be consistently oriented.  (It is well known that if either of these foliations is orientable then so is the other.)   As \cite{LT} recall (see their Theorem 2.4),  a pseudo-Anosov homeomorphism $f$ is orientable if and only if its stretch factor is an eigenvalue of the standard induced action on first homology $f_*: H_1(X, \mathbb Z) \to H_1(X, \mathbb Z)$.

Hubbard-Masur \cite{HM} showed that the pair of measured foliations defines  a quadratic differential and a complex structure on $S$ so that this quadratic differential is holomorphic.   Orientability of the foliations corresponds to the quadratic differential being the square of a holomorphic 1-form (thus,  an {\em abelian differential}), say $\omega$.  Fixing base points and integrating $\omega$ along paths  defines local coordinates on $S$ (in $\mathbb C$ or $\mathbb R^2$, depending on our need),  transition functions are by translations, and the result is a {\em translation surface}, $(S, \omega)$.  (The aforementioned singularities occur at the zeros of $\omega$, and each has a cone angle which is a positive integral multiple of $2 \pi$.)  The pseudo-Anosov $f$ acts affinely with respect to the local Euclidean structure of $(f, \omega)$. It is thus an {\em affine diffeomorphism}: away from the singularities $f$ is a diffeomorphism such that  its Jacobian matrix with respect to the coordinates of the flat structure is constant.  In fact, this constant matrix has eigenvalues   $\lambda, \lambda^{-1}$.       On the other hand, any affine diffeomorphism of a translation surface whose Jacobian matrix is of this form is a pseudo-Anosov homeomorphism.

 \subsection{Blowing up points on surfaces}\label{ss:blowUps}    See \cite{BSW} for a detailed discussion of (real oriented) blow-ups of translation surfaces.  The {\em blow-up} of a translation surface $S$ at $d$ distinct points is  a surface $\hat{S}$  with $d$ boundary components,  each a circle and a   {\em collapsing map},  $\mathfrak{c}:\hat{S}\to S$ which is continuous and  is one-to-one off of these circles, while sending each of these circles to the corresponding blown-up point.   Furthermore, each  boundary component can naturally be given the measure of the cone angle at the singularity; indeed, each point on a boundary component corresponds to, say,  an outgoing direction at the corresponding singularity.   
 
 The fundamental group of a blown up surface is of course directly related to the original surface's fundamental group.
 
%-----------------------
\begin{Lem}\label{l:generateFreely}    Let $S$ be a closed orientable surface of genus $g$.  Let $P = \{p_1, \dots, p_d\}$ be a set of $d$ distinct points on $S$ and  $\mathfrak{c}:\hat{S}\to S$ be the collapsing map from the surface $\hat{S}$  obtained by blowing up the set $P$.  

Let $\{\hat{\alpha}_i\}_{1\le i\le 2g+d-1}$ be a set of closed curves on $\hat{S}$, set $\alpha_i = \mathfrak{c}(\hat{\alpha}_i)$,  and suppose  that  both
\begin{enumerate}
\item[(i)]  $\pi_1(S)$ is generated by the classes of the images 
$\{[ {\alpha}_i)]\}_{1\le i\le 2g+d-1}$, and 
\item[(ii)]  for each $1\le i \le d-1$
 there is some $r\ge 1$ and  subset 
$\{j_1, \dots, j_r\} \subset \{1, \dots, 2g + d-1\}$ such that the loop $\hat{\alpha}_{j_1}*\cdots*\hat{\alpha}_{j_r}$ is freely homotopic to the
 blown-up circle $\mathfrak{c}^{-1}(p_{i+1})$. 
 \end{enumerate}
 Then $\{[ \hat{\alpha}_1],...,  [\hat{\alpha}_{2g+d-1}]\}$ is a minimal set of free generators of $\pi_1(\hat{S})$. 
\end{Lem}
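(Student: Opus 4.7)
The plan is to reduce the claim to generation and then invoke the Hopfian property of finitely generated free groups. Since $\hat{S}$ is a compact orientable surface of genus $g$ with $d$ boundary circles, its Euler characteristic is $2-2g-d$, and $\pi_1(\hat{S})$ is free of rank $2g+d-1$. Because every surjective endomorphism of a finitely generated free group is an automorphism, a generating set of cardinality $n$ in $F_n$ is automatically a free basis. So the ``minimal set of free generators'' conclusion follows once we show that the $[\hat{\alpha}_i]$ generate $\pi_1(\hat{S})$.

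Let $H = \langle [\hat{\alpha}_1], \dots, [\hat{\alpha}_{2g+d-1}] \rangle$ and $K = \ker \mathfrak{c}_*$, where $\mathfrak{c}_* \colon \pi_1(\hat{S}) \to \pi_1(S)$ is the map induced by the collapsing. Van Kampen identifies $K$ with the normal closure in $\pi_1(\hat{S})$ of the boundary loops $\beta_j = \mathfrak{c}^{-1}(p_j)$, $j=1,\dots,d$. Condition (i) gives $\mathfrak{c}_*(H) = \pi_1(S)$, hence $H \cdot K = \pi_1(\hat{S})$, and it thus suffices to verify $K \subseteq H$. Condition (ii) gives that each $\beta_{i+1}$ for $1 \le i \le d-1$ is freely homotopic---hence conjugate in $\pi_1(\hat{S})$---to an explicit product of the $[\hat{\alpha}_j]$'s, an element of $H$. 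Abelianizing collapses the conjugation, and combined with the relation $\sum_j [\beta_j] = 0$ in $H_1(\hat{S})$ (which holds because $\partial \hat{S}$ bounds $\hat{S}$) this places every $[\beta_j]$ into the image of $H$ in $H_1(\hat{S})$, so the $[\hat{\alpha}_i]$ span $H_1(\hat{S}) \cong \mathbb{Z}^{2g+d-1}$.

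The main obstacle is to promote the conjugation statements in (ii) to honest inclusions: we need the $\beta_j$ themselves to lie in $H$, not merely conjugates of them. My strategy is to fix a free basis of $\pi_1(\hat{S})$ adapted to $\mathfrak{c}$, namely $\{A_1, B_1, \dots, A_g, B_g, \beta_2, \dots, \beta_d\}$, where $A_k, B_k$ lift a standard generating set of the closed surface group $\pi_1(S)$. By choosing the base paths from the basepoint to each boundary component so that the based representative of $\beta_{i+1}$ supplied by (ii) is literally the concatenation $[\hat{\alpha}_{j_1}]\cdots[\hat{\alpha}_{j_r}]$, one pins each such $\beta_{i+1}$ into $H$ on the nose. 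Condition (i) then writes each $A_k, B_k$ as a product of $[\hat{\alpha}_j]$'s times an element of $K$; since the $\beta_j$ now sit inside $H$ and normally generate $K$, the remaining $K$-error terms can be absorbed. Once every member of the chosen free basis of $\pi_1(\hat{S})$ lies in $H$ one has $H = \pi_1(\hat{S})$, and the Hopfian property upgrades this to the desired free-basis conclusion.
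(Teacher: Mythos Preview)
Your reduction to generation via the Hopfian property of $F_{2g+d-1}$ is correct, and you have put your finger on the real difficulty: condition~(ii) only tells you that each boundary class $\beta_{i+1}$ is \emph{conjugate} in $\pi_1(\hat{S})$ to an element of $H$, and you must somehow promote this to $\beta_{i+1}\in H$. Your proposed fix, however, does not close the gap. The ``choose base paths'' move amounts to declaring $\beta_{i+1}$ to be the specific word $w_i:=[\hat{\alpha}_{j_1}]\cdots[\hat{\alpha}_{j_r}]$, i.e.\ a conjugate $\gamma_i\,\beta_{i+1}^{\mathrm{std}}\,\gamma_i^{-1}$ of a standard boundary generator by some $\gamma_i\in\pi_1(\hat{S})$. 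You then need $\{A_1,B_1,\dots,A_g,B_g,w_1,\dots,w_{d-1}\}$ to remain a free basis, but replacing a free generator by its conjugate under an \emph{arbitrary} element of the ambient group does not preserve being a basis. The subsequent step has the same circularity: even with each $\beta_j\in H$, the ``$K$-error terms'' you want to absorb lie in the \emph{normal} closure of the $\beta_j$ in $\pi_1(\hat{S})$, so you would need every $\pi_1(\hat{S})$-conjugate of each $\beta_j$ to lie in $H$, which presupposes $H=\pi_1(\hat{S})$.

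In fact no argument can succeed, because the lemma as stated is false. Take $g=1$, $d=1$, so $\pi_1(\hat{S})=F_2=\langle A,B\rangle$, $\mathfrak{c}_*$ is abelianization onto $\pi_1(T^2)\cong\mathbb{Z}^2$, and condition~(ii) is vacuous. With $\hat{\alpha}_1=A$ and $\hat{\alpha}_2=BABA^{-1}B^{-1}$ one has $\alpha_1=(1,0)$, $\alpha_2=(0,1)$, so condition~(i) holds; yet a Stallings folding shows $\langle A,\,BABA^{-1}B^{-1}\rangle$ is a proper (indeed infinite-index) subgroup of $F_2$. The paper's own proof has exactly the same lacuna: the assertion ``Certainly each $[\hat{\beta}_i]$, $1\le i\le 2g$, is expressible as a word in $\{[\hat{\alpha}_1],\dots,[\hat{\alpha}_{2g+d-1}]\}$'' is only valid modulo $\ker\mathfrak{c}_*$, and the appeal to~(ii) for the $\hat{\beta}_{2g+k}$ gives only conjugacy. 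For the specific Arnoux--Yoccoz curves the conclusion may well be true, but it needs a direct verification rather than this lemma.
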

%-----------------------

\begin{proof}  The surface $\hat{S}$ deformation retracts to a homeomorphic image of $S \setminus P$.   This in turn retracts onto a wedge of $2g+d-1$ circles.  Thus,   there are  closed curves  $\beta_i, 1 \le i\le 2g + d-1$ on $S$ (all avoiding $P$) whose classes generate $\pi_1(S \setminus P)$ and whose pre-images on $\hat{S}$, say $\hat{\beta_i}, 1 \le i\le 2g + d-1$ have classes generating  $\pi_1(\hat{S})$.   Moreover, using van Kampen's theorem,  we can and do choose the $\beta_i$ such that $\pi_1(S)$ is generated by  $\{[\beta_i]\}_{1 \le i\le 2g}$ and that each $\beta_{2g+j}, 1\le j \le d-1$ is a loop about $p_{j+1}$.  In particular,   $\mathfrak{c}_*: \pi_1(\hat{S}) \to \pi_1(S)$ has kernel generated by $\{ [\beta_{2g+j}]\}_{1\le j \le d-1}$.

Certainly each $[\hat{\beta}_i], 1 \le i\le 2g$ is expressible as a word in  $\{[ \hat{\alpha}_1],...,  [\hat{\alpha}_{2g+d-1}]\}$.
Furthermore,  each of the $\hat{\beta}_{2g+k}$ is in the free homotopy class of a blown up boundary circle, and hence in light of ($ii$)  can be expressed as a word in $\{[ \hat{\alpha}_1],...,  [\hat{\alpha}_{2g+d-1}]\}$.   It follows that  this latter set also  freely generates $\pi_1(\hat{S})$.
\end{proof}

Fundamental to the use of blowing-up in the setting of translation surface in Fried's construction \cite{Fr85} is the fact that given an affine diffeomorphism on a translation surface $f: S \to S$ upon blowing up a full  periodic orbit  (or multiple such orbits),  there is a uniquely corresponding $\hat{f}: \hat{S} \to \hat{S}$.  See \cite{Boyland} for a terse overview of this for general diffeomorphisms of surfaces.   Suppose that $\mathscr O = \{p_1, \dots, p_d\}$ is a subset of $S$ which $f$  permutes and let $\mathfrak c: \hat{S}\to S$ be the result of blowing up the points in $\mathscr O$.   Since $f$ sends $S\setminus \mathscr O$ to itself, for $\hat{x}\in \hat{S}$ with $\mathfrak{c}(\hat{x}) \notin \mathscr O$,  we can and do set $\hat{f}(\hat{x}) = \mathfrak{c}^{-1}(\, f(x)\,)$.   Fix $p \in \mathscr O$ and $\hat{x} \in \mathfrak{c}^{-1}(p)$.   Then $\hat{x}$ corresponds to an outgoing direction at $p$ on $S$;   the (orientation preserving) affine diffeomorphism $f$ sends this direction to an outgoing direction at $f(p)$ and hence we let $\hat{f}(\hat{x})$ be the point on $\mathfrak{c}^{-1}(\,f(p)\,)$ that corresponds to this direction. 

Under reasonable conditions, there is an inverse operation; one can `blown down' orbits of boundary components.   Since the collapsing map is injective off of each such orbit,  in the pseudo-Anosov setting stretch factors are preserved.

\subsection{Cross sections to the flow in a mapping torus} 
The {\em mapping torus} $M_f$ of a homeomorphism $f: X \to X$ is the space $X\times \mathbb R$ modulo the relation $(x, t+1) \sim (f(x), t)$.  The map $X\times \mathbb R \to M_f$ given by $(x, t)\mapsto [x,t \pmod 1]$ is a covering space projection, whose cyclic infinite deck transformation group is generated by $\tau: (x,t) \mapsto (x,t+1)$.

 For real $t$, one has a function $\Phi_t: [(x,s)] \mapsto [(x, s+t)]$, the  {\em suspension flow} on $M_f$.   When $X$ is smooth (possibly with boundary) and $f$ a diffeomorphism,  a submanifold  $K \subset M_f$ is a {\em cross section} of the  flow if every flow line $\{(x,t)\mid t \in \mathbb R\}$ meets $K$ transversely.   The (first) {\em return map} to $K$ is $r:K\to K$ given by $r(p) = \Phi_t(p)$ with $t>0$ and minimal such that this is again a point in $K$.      In \cite{Fr79}, Fried shows that  $f$ being a pseudo-Anosov map implies that the return map $r$  to any cross section $K$ is also pseudo-Anosov.    In \cite{Fr85}, Fried states that when $f$ is orientable  and preserves the orientations of its foliations, then arguing as in \cite{Fr79} shows that $r$ is orientable.

  Given  a cross section $K$, at each point $p \in K$ we can reparametrize the flow arc from $p$ to $r(p)$ to correspond to the unit interval.  Using this, one shows that the mapping torus of the return map $r$ to $K$ is homeomorphic to $M_f$; with the slightest of abuses, we will say that they are the same.  %Fried xxxx refers to cross sections being {\em flow equivalent} xxxxx.} 

Each cross section $K$ is in particular a surface. Since $M_f$ is a three manifold, there is a corresponding integral first cohomology class $u_K$. In particular, $u_K$ assigns a positive integer to the homology classes of each of the closed flow orbits of $M_f$.   Indeed, Fried \cite{Fr82GeomCrossTop}   shows that positivity of an integral $u$ on all such classes implies that $u=u_K$ for some section $K$.

\subsection{Mapping torus, Wang sequence, and Fried's Theorem}\label{ss:toFriedsTheorem}   The mapping torus of a homeomorphism $f: S \to S$ gives a fibration over $S^1$, giving the exact  `Wang sequence' 
\[ 0 \to H_1(S; \mathbb Z) \xrightarrow{f_*-\text{Id}} H_1(S; \mathbb Z)  \to H_1(M_f; \mathbb Z) \to \mathbb Z \to 0, \]
from which $H_1(M_f; \mathbb Z) \cong \text{Coker}(f_*-\text{Id})\oplus \mathbb Z$ and then certainly
\begin{equation}\label{e:freeSplit} H_1(M_f; \mathbb Z)/\text{Torsion} \cong \text{Coker}(f_*-\text{Id})/\text{Torsion}\oplus \mathbb Z.
\end{equation} 
 
We remark that we follow the terminology of \cite{Fr85}; Serre proved and named, in his dissertation \cite{S},  the Wang long exact sequence  only for fibrations over $S^n, n\ge 2$.  That said, the sequence in the case of $S^1$ has been long used, and can be verified say by specializing Hatcher's [\cite{Hatcher}, Example 2.48] application of generalized Mayer-Vietoris sequences.

Let $Q = Q_f =  \text{Coker}(f_*-\text{Id})/\text{Torsion}$, and let $S_Q$ be the covering space of $S$ corresponding to  the kernel of the composite group homomorphism $\pi_1(S)\to H_1(S;\mathbb Z)\to \text{Coker}(f_*-\text{Id}) \to \text{Coker}(f_*-\text{Id})/\text{Torsion}$.   Thus, the deck transformation group of $S_Q$ over $S$ is (isomorphic to) $Q$.    Any $f':S'\to S'$ homotopic to $f$ is such that $S'$ has a corresponding cover, also with (an isomorphic copy of) $Q$ as deck transformation group; denote this cover as $S'_Q$.

%----------------------------------------------
\begin{figure}%\scalebox{.8}{
\begin{tikzcd}[column sep=2pc,row sep=2pc]
S_{\mathcal A}\arrow{dd}[swap]{H_1(S; \mathbb Z)}\arrow{dr}{\ker \psi_Q}&
&&&&M_{f}^{^{\text{fab}}}\cong S_Q \times \mathbb R\arrow{dd}[swap]{H_1(M_f; \mathbb Z)/\text{Tor} =: G}\arrow{dr}{Q}&\\
&S_Q\arrow{dl}{Q}\arrow[hook]{rrrru}&&&&&M'_f\cong S \times \mathbb R\arrow{dl}{\langle \tau \rangle \cong \mathbb Z}\\
S\arrow[hook]{rrrrr}&&&&&M_f&
\end{tikzcd}
%}
\caption{ Here $S_{\mathcal A}$ denotes the   universal abelian covering space of $S$ and $M_{f}^{^{\text{fab}}}$  the universal free abelian covering space of $M_f$.  (The latter notation is borrowed from \cite{Parlak}.)   The group $Q =   \text{Coker}(f_*-\text{Id})/\text{Torsion}$ is the image of a group homomorphism $\psi_Q:H_1(S; \mathbb Z) \to Q$. 
The Wang sequence \eqref{e:freeSplit} shows that $Q$ is also the deck transformation group for some  intermediate covering space of $M_f$, here $M_{f}^{^{\text{fab}}}/M'_{f}$. }
\end{figure}
%----------------------------------------------

Let $G = G_f = H_1(M_f; \mathbb Z)/\text{Torsion}$.  Any cohomology class $u \in H^1(S; \mathbb Z)$ gives a $\mathbb Z$-valued homomorphism  on $G$.  Given $\zeta$ any quotient of elements of the group ring $\mathbb Z[G]$, the class $u$ defines  the rational function $p_u(t)$ by replacing each term $a g$ where $a\in \mathbb Z, g \in G$ appearing in $\zeta$ by $a t^{u(g)}$.  We call $p_u(t)$ the {\em $u$-specialization} of $\zeta$.\\

     Following \cite{Liu2}, given a covering space  $\pi: Y \to X$ and a map $f: X \to X$, we call any $F:Y \to Y$ such that $\pi\circ f = F\circ \pi$ 
an {\em elevation} of $f$ to $Y$. (This disambiguates the common usage of ``lift" for such an $F$.)  A version of the following is found by combining various statements in \cite{Fr85}.

%-----------------------
\begin{Thm}\label{t:frieds}[Fried's Theorem]   Suppose that $f:S \to S$ is an orientable pseudo-Anosov homeomorphism  and that   $f':S'\to S'$ is a cellular map homotopic to $f$.  Let $Q = \text{Coker}(f_*-\text{Id})/\text{Torsion}$, with $S_Q$ the cover of $S$ as above, and $S'_Q$ the corresponding cover of $S'$.    Also let $\mathcal F_i$ be the action of  $f'_Q$ on the $i$-cells of $S'_Q$ as $\mathbb Z[Q]$-modules, $i\ge 0$.  Set 
\[ \zeta_{Q,f}(\tau) =  \prod_{i\ge 0} \, \det( \emph{Id}- \tau \mathcal F_i)^{i+1},\]  
choose an isomorphism of the group $Q \oplus \langle \tau \rangle$ with $G$ compatible with \eqref{e:freeSplit} and apply this so as to view $\zeta_{Q,f}(\tau) = \dfrac{1+p}{1+q}$ as a quotient of elements of the group ring $\mathbb Z[G]$.   

Then  an integral cohomology class $u\in H^1(M_f; \mathbb Z)$ corresponds to a section $K= K_u$ of the suspension flow on $M_f$ if and only if for every  $g \in \mathbb Z[G]$ appearing in either $p$ or $q$ with nonzero integral coefficient,  the cohomology class $u$ assigns a positive value to  $g$.     

Furthermore, when this holds,  the rational function in the single variable $t$ given by the $u$-specialization of $\zeta_{Q,f}(\tau)$ is of degree equal to $-\chi(K)$ and has its leading zero equal to the stretch factor of the pseudo-Anosov return map to $K$.  
\end{Thm}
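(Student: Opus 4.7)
The proof breaks naturally into three parts: well-definedness of $\zeta_{Q,f}(\tau)$ as an element of the fraction field of $\mathbb{Z}[G]$, characterization of cross-section classes by a positivity condition, and identification of the specialization's degree and leading zero. The plan follows Fried \cite{Fr85}, with Thurston's fibered-face picture providing the geometric framework.

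For well-definedness I would first verify that different choices of $\mathbb{Z}[Q]$-bases for the cellular chain modules of $S'_Q$, and different choices of elevation $f'_Q$, affect each $\det(\mathrm{Id}-\tau\mathcal{F}_i)$ only up to multiplication by units in $\mathbb{Z}[Q]$, so the entire product is canonical up to such a unit and admits a well-defined reduced expression $(1+p)/(1+q)$. Normalizing both numerator and denominator to contain the constant term $1\in G$ uses the chosen isomorphism $Q \oplus \langle\tau\rangle \cong G$ compatibly with \eqref{e:freeSplit}.

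The core of the proof is the positivity criterion. I would expand each $\det(\mathrm{Id}-\tau\mathcal{F}_i)$ as a signed sum over permutations of basis cells, reorganized into a sum over fixed points of iterates of $f'_Q$. Under the covering projection these descend to closed orbits of the suspension flow on $M_f$, and by Fried's earlier characterization of the fibered cone \cite{Fr82GeomCrossTop}, a class $u\in H^1(M_f;\mathbb{Z})$ corresponds to a cross section precisely when it is positive on every such orbit class. Translating to the reduced rational expression, $u$ must be positive on every monomial appearing in $p$ or $q$. The subtle point is ensuring that reduction from $\prod_i \det(\mathrm{Id}-\tau\mathcal{F}_i)^{i+1}$ to $(1+p)/(1+q)$ does not cancel away any orbit class needed to cut out the fibered cone; here one must argue that surviving monomials still span enough half-spaces to recover Fried's cone.

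Finally, given $u = u_K$ corresponding to a section $K$, the specialization $p_u(t)$ is obtained by replacing each $g\in G$ with $t^{u(g)}$. Identifying the $Q$-cover with the cyclic cover of $M_f$ associated to $u$ then interprets $p_u(t)$ as the Lefschetz-type zeta function of the return map $r:K\to K$ on cellular chains; an Euler-characteristic accounting on chain ranks yields its degree as $-\chi(K)$, and since $r$ is an orientable pseudo-Anosov map (by \cite{Fr79} together with Fried's remark on preservation of orientability), the largest root of the characteristic polynomial of $r_*$ on $H_1(K;\mathbb{R})$ is the stretch factor and appears as the leading zero of $p_u(t)$. The hardest step is the positivity criterion in the third paragraph: it demands a careful matrix-trace expansion showing that each closed orbit class contributes with a consistent sign throughout the product, so that the abstract geometric positivity on closed orbits matches combinatorial positivity on the Newton polytope of the reduced $\zeta_{Q,f}(\tau)$.
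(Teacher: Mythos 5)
The paper does not actually prove this theorem: it is assembled from Fried's own papers (the positivity criterion from \cite{Fr82FlowEqui} via Bowen's Markov-partition approach, the cross-section cone from \cite{Fr82GeomCrossTop}, the pseudo-Anosov return map from \cite{Fr79}), and the paper's Subsection~2.6 supplies the modern justification by identifying $\zeta_{Q,f}$ with the multivariable Alexander polynomial $\Delta^{\sharp}_{M_f}$ (Liu) and invoking McMullen's theorem that, for orientable $f$ and $b_1\ge 2$, the fibered face of the Thurston norm ball is a face of the Alexander norm ball, so that the cone of section classes is cut out by the vertices of the Newton polytope adjacent to the distinguished vertex $v_f$. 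Your outline instead reconstructs Fried's original strategy, which is legitimate in spirit, but it leaves the central step as an acknowledged hole: you say the "subtle point" is that reduction to $(1+p)/(1+q)$ must not cancel away the orbit classes needed to cut out the fibered cone, and you propose to handle it by a "careful matrix-trace expansion showing that each closed orbit class contributes with a consistent sign." That device cannot close the gap. There are infinitely many closed orbits, cancellation in the group ring genuinely occurs, and the theorem asserts that positivity on the \emph{finitely many} monomials surviving in the reduced expression is equivalent to positivity on all closed-orbit classes (hence, by \cite{Fr82GeomCrossTop}, to the existence of a section). The needed implication is a convex-geometric/duality statement --- that the closure of the homology directions of the flow is controlled by the Newton polytope of the zeta function, equivalently that the dual vertex $v_f$ of the fibered face is a vertex of $\mathcal N(\Delta^{\sharp}_{M_f})$ and the section cone is the cone of classes positive on the edges emanating from it --- and this is exactly what McMullen's Alexander-versus-Thurston norm comparison (or Fried's original flow-equivalence argument) provides; no sign bookkeeping in the determinant expansion substitutes for it. Note also the normalization subtlety the paper flags: the reduced quotient must be positioned (using the symmetry of the Alexander polynomial) so that the element of symmetry maps to $v_f$, otherwise "positivity on the monomials of $p$ and $q$" is not even a well-posed criterion, since $\zeta_{Q,f}$ is only defined up to a unit $\pm h$, $h\in G$.

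Your third part is essentially right as a sketch and matches the paper's gloss (McMullen's proof of his Theorem~7.1 gives the stretch factor as a zero of the $u$-specialization; the degree count comes from the cellular model of the return map on $K$), though you should note that the degree statement needs the top-degree term not to cancel, which again uses that the relevant exponent is achieved only at the distinguished vertex of the Newton polytope. So: the architecture of your proposal is reasonable, but as written the if-and-only-if in the second paragraph of the theorem --- the only part that is genuinely hard --- is asserted rather than proved, and the tool you name for it would not succeed.
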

%-----------------------

The proof of the theorem given in \cite{Fr85} is based upon several papers of Fried.  For example, the proof of the positivity condition is given in \cite{Fr82FlowEqui}; it relies on an approach of Bowen. For an exposition of that approach, see  Shub's chapter on Markov partitions in \cite{Shub}.  In Subsection~\ref{s:faceThePositive}, we discuss aspects of the theorem in light of much more recent work of McMullen and of Liu. 

\subsection{Fibered faces}\label{ss:fibFace}  We first briefly recall some results of Thurston \cite{ThurstonNorm}.  Let $M$ be an orientable, connected 3-manifold with either empty or toroidal boundary.    For a surface $S \subset M$ with connected components $S_1, S_2, . . . , S_k$, let $\chi_{-}(S) = \sum_{i=1}^{k}\,\max( -\chi(S_i), 0)$, where $\chi$ denotes Euler characteristic.      For any $u\in H^1(M;\mathbb Z)$, there is a dual properly embedded surface $S$.  (Recall that `properly embedded' means that the inclusion map  is a diffeomorphism and the image meets the  boundary  --- if any --- nicely.) 
The {\em Thurston  norm} of $u$ is   
\[||u||_{\text{Th}}  = \min\{ \chi_{-}(S) \mid S\; \text{is properly embedded and dual to } u\}.\] 
This is then extended linearly to be defined on all of $H^1(M;\mathbb R)$.  In general, this is only a semi-norm. It is, however, a norm when $M$ is hyperbolic.  Furthermore Thurston \cite{ThurstonPA}  showed that $M$ is hyperbolic if and only if it is the mapping torus of a pseudo-Anosov homeomorphism.    The unit ball $\mathscr B_{Th}$ of the Thurston norm is a polytope, symmetric about the origin.  A top dimensional face $F$ of this ball is called {\em fibered} if  there is a $u \in  H^1(M;\mathbb Z) \cap \,\mathbb R^+\cdot F$ such that $u$ is dual to the fiber $S$ of a fibration $p:M \to S^1$.  One then also says that $u$ is fibered.  When $F$ is fibered,  every integral $u$ in $\mathbb R^+\cdot F$ is fibered.   The dual unit ball in $H_1(M_f; \mathbb Z)$ is then also a polytope, with $F$ dual to a distinguished vertex.  Thus,  when $M= M_f$ is a mapping torus of a pseudo-Anosov homeomorphism,  there is a distinguished fibered face $F_f$ of its Thurston ball, and a dual vertex $v_f$   such for any $u$ as above, $||u||_{\text{Th}}  = u(v_f)$. 

Throughout various works, Fried's perspective is that the   integral $u$   in $\mathbb R^+\cdot F_f$ correspond exactly to the sections of the suspension flow on $M_f$.  These cohomology classes are in fact then characterized by being positive on the homology classes of the closed orbits of the flow;   see \cite{Landry} for details in the case where $M_f$ has boundary.   Fried's Theorem, Theorem~\ref {t:frieds}, then characterizes these  $u$ in terms of positivity on a finite set.
 
\subsection{Fried's Theorem in light of more recent works}\label{s:faceThePositive}    Suppose that $f:S\to S$ is an orientable pseudo-Anosov homeomorphism, and that $Q = Q_f =  \text{Coker}(f_*-\text{Id})/\text{Torsion}$ has rank at least one.  In the case of $S$ being a closed surface, Liu \cite{Liu}, as he states, mainly following Fried's works,  uses \cite{T} to show that Fried's $\zeta_{Q,f}(\tau)$ equals $\Delta^{\sharp}_{M_f}$, the multivariable Alexander polynomial of $M_f$ with respect to its suspension flow.     This latter is defined to be a generator of a certain principal ideal in $\mathbb Z[G]$ --- one of the key reasons for using $G$ instead of $H_1(M_f; \mathbb Z)$ is that  the group ring $\mathbb Z[G]$ is a unique factorization domain --- and hence is only well-defined up to multiplication by units, thus by elements of the form $\pm h$ with $h \in G$.    

  Note that Liu's restriction to closed $S$ implies that also  $M_f$ is without boundary;  this allows him to invoke [\cite{T}, Theorem~14.12]   to identify $\Delta^{\sharp}_{M_f}$.  When passing from $f:S\to S$ to 
  $\hat{f}:\hat{S} \to \hat{S}$ by blowing up a finite number of $f$-orbits, the resulting $M_{\hat{f}}$  has a boundary formed by a union of tori.  One can again argue as does Liu but then rather invoke 
 [\cite{T}, Corollary~11.9] to find that  $\zeta_{Q,\hat{f}}(\tau) = \Delta^{\sharp}_{M_{\hat{f}}}$ when $Q = Q_{\hat{f}}$ has rank at least one.

Fried argues that his zeta function is a homotopy invariant by reliance on notions of  generalized Lefschetz numbers of Nielsen fixed point theory,  see \cite{Jiang}.  The homotopy invariance of Alexander polynomials is usually shown by a reliance on Reidemeister torsion, see \cite{T}.   While  Fried's Theorem, Theorem~\ref {t:frieds},  gives $\zeta_{Q,f}(\tau)$ as a  quotient of elements of $\mathbb Z[G]$, by 
Liu's result,  this quantity is always expressible as a single element of $\mathbb Z[G]$.   

McMullen's  \cite{McPoly} Alexander norm is defined by first expressing 
 $\Delta^{\sharp}_{M_f} = \sum_{\alpha}\, a_{\alpha} g_{\alpha}$ with integral  $a_{\alpha}$ and each $g_{\alpha} \in G$, and then letting $||u||_A = \max u(g_{\alpha} -  g_{\alpha'})$,   with the maximum taken over all $\alpha, \alpha'$ such that $a_{\alpha}a_{\alpha'}\neq 0$.  McMullen  showed  that the unit ball here also is a polytope and, when  $f$ is orientable preserving the orientations of its invariant foliations and $M_f$ has Betti number $b\ge 2$, that the fibered face $F_f$ of the Thurston ball is also a face of the Alexander norm ball.   The Alexander polynomial is symmetric:  There exists some $h \in G$ such that  $\Delta^{\sharp}_{M_f} = \pm h \, \sum_{\alpha}\, a_{\alpha} g^{-1}_{\alpha}$, see McMullen's \cite{McPoly} arguments of symmetry in the setting of his Teichm\"uller polynomial.  McMullen [\cite{McPoly}, proof of Theorem~7.1]  also shows that,  for $f$ as above, $u$-specializations of $\Delta^{\sharp}_{M_f}$  have the stretch factor of the return map to $K_u$ as a zero.  (Parlak \cite{Parlak} points out that this is related to an assertion of  Milnor of the 1960s.) 
 
 A second advantage of using the quotient $G$ instead of $H_1(M_f; \mathbb Z)$ itself  is that upon choosing $b$ generators for $G$
 there is an explicit ring isomorphism  $\phi: \mathbb Z[G] \to \mathbb Z[t_{1}^{\pm 1}, \dots, t_{b}^{\pm 1}]$,  to the ring of Laurent polynomials in $t_1, \dots, t_b$.   For this, one defines $\phi$ by extending in the natural manner the map sending the $j^{\text{th}}$ generator to $t_j$. Let $\xi(g) = (e_1, \cdots, e_b)$ be the exponents  of $\phi(g) = \sum\, t_{i}^{e_i}$ for any $g \in G$.    For any element $\Delta \in \mathbb Z[G]$,  its {\em Newton polytope}   $\mathcal N(\Delta)$ is the convex hull in $H_1(M; \mathbb R) \cong \mathbb R^b$ of the    $\xi(g)$  taken over those $g \in G$ appearing in $\Delta$ with nonzero coefficient.   
 
 Liu observes that  Thurston's dual vertex $v_f$,   mentioned in Subsection~\ref{ss:fibFace}, is a vertex of   $\mathcal N(\Delta^{\sharp}_{M_f})$ and that every integral  $u \in \mathbb R^+\cdot F_f$ is positive on any homology class contained in the translate to the origin of the cone of rays emanating from $v_f$ that `point into'  $\mathcal N(\Delta^{\sharp}_{M_f})$  --- see [\cite{Liu}, proof of Prop.~3.1].     Due to the convexity of $\mathcal N(\Delta^{\sharp}_{M_f})$,    Liu's translated cone is the translate of the cone whose extremal rays pass through the neighboring vertices of $v_f$.   Thus, just as in  Fried's Theorem,Theorem \ref{t:frieds}, one has that $u$ corresponds to a section if and only if it is positive on a specific finite set.

\subsection{Terse review of Fried's \cite{Fr85} example}      Fried's \cite{Fr85}  deduction of a pseudo-Anosov map sharing properties with the Arnoux-Yoccoz example in genus three was by way of showing the existence of a pseudo-Anosov return map on a genus three cross section to the flow in the mapping torus of the blow-up in two fixed points for the toral automorphism of the square torus given by his matrix $A^2 = \begin{pmatrix}5&2\\2&1\end{pmatrix}$.    In the paper,  Fried sketched the calculation of the multivariable Lefschetz zeta function $\zeta_{Q, f}$ in that setting, and determined the cohomology class $u$ such that the $u$-specialization has degree six and leading eigenvalue equal to the stretch factor of the earlier Arnoux-Yoccox example.  He then showed that the corresponding cross section  has two boundary components,  blowing these down results in the pseudo-Anosov map in question.

\section{The blown-up Arnoux-Yoccoz surface}

 \subsection{The Arnoux-Yoccoz  translation surface}

%-------------------------------------------------------------------------------------
\begin{figure}[h]
\scalebox{1.1}{
\begin{tikzpicture}[x=3.3cm,y=3.3cm] 
%make outline
\draw   (0,0)--(0,1)--(\alp, 1)--(\alp, 0.839) --(0.839, 0.839)--(0.839, \alp)--(1, \alp) -- (1,0) --cycle; 
%mark blue singularity
 \foreach \x/\y in {0.191/0.352, 0.420/0.648, 0.771/0.191%
} { \node at (\x,\y) [blue]{$\bullet$}; } 
%mark red singularity
 \foreach \x/\y in {0.044/0, 0.339/0, 0.5/0, 0.272/1, 0.691/0.839, 0.920/0.54%
} { \node at (\x,\y) [red]{$\bullet$}; } 
% slits to blue singularities as thin lines (may want to use colors to help show identifications
\draw[thin] (0.191,0)--(0.191, 0.352);        
\draw[thin] (0.420, 0)--(0.420, 0.648); 
\draw[thin] (0.771, 0)--(0.771, 0.191); 
%           identifications  horizontal sides 
%  1
\draw[thin] [green](0.228,1)--(0.228,1.04); 
\node at (0.11, 1.04){\tiny{$1$}};
%\draw[thin] [green](0.772,0)--(0.772,-0.04); 
\node at (0.88, -0.04) {\tiny{$1$}};
%  2
\node at (0.25,1)[pin={[pin edge=<-, pin distance=12pt]90:{\tiny{$2$}}}] {};
\node at (0.02,0)[pin={[pin edge=<-, pin distance=12pt]270:{\tiny{$2$}}}] {};
%  3
\node at (0.4, 1.04){\tiny{$3$}};
\node at (0.63, -0.04) {\tiny{$3$}};
%   4  
\node at (0.61, 0.879) {\tiny{$4$}};
\node at (0.12, -0.04) {\tiny{$5$}};
%   5 
\node at (0.76, 0.879) {\tiny{$5$}};
\node at (0.27, -0.04) {\tiny{$4$}};
%   6 
\node at (0.88, 0.58) {\tiny{$6$}};
\node at (0.46, -0.04) {\tiny{$6$}};
%   7
\node at (0.96, 0.58) {\tiny{$7$}};
\node at (0.39, -0.04) {\tiny{$7$}};
%  identifications  for vertical sides 
%      A
\node at (1.04, 0.27) {\tiny{$A$}};
\node at (-0.04, 0.27) {\tiny{$A$}};
\draw[thin] [green](-0.04,\alp)--(0,\alp); 
%      B
\node at (0.879, 0.7) {\tiny{$B$}};
\draw[thin] [green](0.420,0.296)--(0.460,0.296); 
\node at (0.46, 0.15) {\tiny{$B$}};
%      C
\node at (0.54,0.95)[pin={[pin edge=<-, pin distance=12pt]30:{\tiny{$C$}}}] {};
%\node at (0.61, 0.879) {\tiny{$C$}};
\draw[thin] [green](0.191,0.161)--(0.231,0.161); 
\node at (0.231, 0.08) {\tiny{$C$}};
%      D
\node at (-0.04, 0.7) {\tiny{$D$}};
\draw[thin] [green](0.420,0.456)--(0.38,0.456); 
\node at (0.37, 0.28) {\tiny{$D$}};
%      E
\node at (0.14, 0.175) {\tiny{$E$}};
%\draw[thin] [green](0.420,0.457)--(0.460,0.457); 
\node at (0.465, 0.46) {\tiny{$E$}};
%      F
\node at (0.37, 0.55) {\tiny{$F$}};
\node at (0.81, 0.1) {\tiny{$F$}};
%      G
\node at (0.231, 0.25) {\tiny{$G$}};
\node at (0.73, 0.1) {\tiny{$G$}};
\end{tikzpicture}
}
\caption{The slitted polygon $\mathcal P$. Segments $A, B, C$ of lengths $\alpha, \alpha^2, \alpha^3$ where  $\alpha$ be the positive real root of $p(x) = x^3+x^2+x-1$.    See (\cite{Hteich2}, Fig.~8.3.12) for other segment lengths; see also (\cite{HLM}, Fig.~2).   The edge identifications are by translation, and result in the Arnoux-Yoccoz \cite{AY} $g=3$ translation surface, $X$.  Note that each of the set of the red points and of the blue points is identified to a single point of  cone angle $6\pi$. }
\label{f:hubbRepn}
\end{figure}
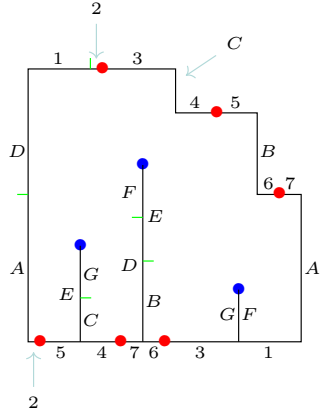
%------------------------------------------------------------------------------------- 

%-----------------------
\begin{Def}  Fix $\alpha$ as the positive real root of $p(x) = x^3+x^2+x-1$. Let $\mathcal P$ be the slitted planar polygon of Figure~\ref{f:hubbRepn}.  The Arnoux-Yoccoz surface, $X$, is the surface given by making the identifications indicated on that figure.   Define the following subregions of $\mathcal P$:

\begin{align*}
    R_1&= \{(x,y)\in \mathcal P \,\vert\,  \frac{\alpha+\alpha^2}{2}<x\leq 1,~ 0\leq y\leq \alpha\},\\ 
    R_2&=\{(x,y)\in \mathcal P \,\vert\,   \alpha<y\leq 1\},\\
   % R_3&=\{(x,y)\in \mathcal P \,\vert\,   0\leq x\leq \frac{\alpha+\alpha^2}{2},~\alpha< y\leq 1\},\\
    R_3&=\{(x,y)\in \mathcal P \,\vert\, 0\leq x\leq \frac{\alpha+\alpha^2}{2},~0\leq y\leq \alpha \}.
\end{align*}

Let $h: \mathcal P \to \mathcal P$ be given piecewise by 
\[
h(x,y)=\begin{cases}
        (\alpha x, \alpha^{-1} y)- (\frac{\alpha-\alpha^4}{2},0) &~\text{if}~ (x,y)\in R_1,\\
        (\alpha x, \alpha^{-1} y)+ (\alpha,-1) &~\text{if}~ (x,y)\in R_2,\\
        %(\alpha x, \alpha^{-1} y)+ (\alpha,-1)&~\text{if}~ (x,y)\in R_3,\\
       (\alpha x, \alpha^{-1} y)+ (\frac{\alpha-\alpha^4}{2},0)&~\text{if}~ (x,y)\in R_3\,.
   \end{cases}
\]
\end{Def}
%-----------------------

It is easily checked that the above expresses the Arnoux-Yoccoz map in explicit coordinates as an affine diffeomorphism.
%-----------------------
\begin{Lem}   The map $h: \mathcal P \to \mathcal P$ respects the identifications and hence induces a homeomorphism, also denoted by $h$, on $X$.  This map $h$ agrees with the $g=3$ pseudo-Anosov of stretch factor $\lambda = \alpha^{-1}$ of Arnoux-Yoccoz \cite{AY}. 
\end{Lem}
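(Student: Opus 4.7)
The plan is to establish the two claims in sequence: first that $h$ descends to a well-defined continuous map on $X$, and second that the induced map is the Arnoux-Yoccoz pseudo-Anosov. Since all four pieces of $h$ share the common linear part $\begin{pmatrix}\alpha & 0 \\ 0 & \alpha^{-1}\end{pmatrix}$, the induced map (once it is known to descend) is automatically affine with respect to the flat structure, with eigenvalues $\alpha^{\pm 1}$, hence pseudo-Anosov with stretch factor $\alpha^{-1}$; all the work is in the descent and the identification with Arnoux-Yoccoz's map.

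For the descent, I would organize the calculation by the identifications on $\partial \mathcal P$, pairing each numbered or lettered edge with its partner and checking the pair is preserved. Concretely, if edges $e_1, e_2 \subset \partial \mathcal P$ are identified by a translation $\vec v = (v_x,v_y)$, then for the relevant pieces $R_i \supset e_1$ and $R_j \supset e_2$ with translation constants $\vec t_i, \vec t_j$, one has
\[
h(e_2) - h(e_1) \;=\; (\alpha v_x,\, \alpha^{-1} v_y) + (\vec t_j - \vec t_i),
\]
and one must verify that this is again the translation vector of an identification on $\mathcal P$. The subregions $R_1, \ldots, R_4$ were chosen precisely so that each stretches/contracts into a thin horizontal strip that, after the translation $\vec t_i$ is applied, reassembles inside $\mathcal P$; the four images then tile $\mathcal P$. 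Thus the check is reduced to finitely many linear equations in $\alpha$ that must hold modulo $\alpha^3 + \alpha^2 + \alpha = 1$. Once all pairs match, $h$ descends to a continuous bijection off the singular set, and the common flat structure forces it to extend to a homeomorphism of $X$.

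To identify the induced $h$ with the Arnoux-Yoccoz pseudo-Anosov in genus three, I would compare directly with Hubbard's presentation in (\cite{Hteich2}, Fig.~8.3.12), which exhibits $X$ as the same slitted polygon $\mathcal P$ up to relabeling, with Arnoux-Yoccoz's pseudo-Anosov acting by the same four-piece affine formula. As a redundant check, one can also compute the first-return map of $h$ to the horizontal cross-section $\{y = \alpha\} \cap \mathcal P$ and recognize the resulting self-similar interval exchange transformation on $\mathbb R/\mathbb Z$ with break points at $\alpha, \alpha + \alpha^2$ as exactly the Arnoux-Yoccoz IET in \cite{AY}; because of this self-similarity, the induced $h$ coincides with the natural extension of that IET to the translation surface, which is Arnoux-Yoccoz's pseudo-Anosov.

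The main obstacle is simply the bookkeeping of the identification check: the polygon has seven numbered edges together with the seven lettered segments along the slit, so there are many pairs to match. The only non-routine input is the defining relation $\alpha^3 + \alpha^2 + \alpha = 1$; every translation constant appearing in the definition of $h$ (namely $-\tfrac{\alpha - \alpha^4}{2}$, $\alpha$, and $-1$) has been engineered so that, modulo this relation, the images of paired edges align to give back $\mathcal P$. No subtler ingredient is required.
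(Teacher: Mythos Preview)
Your proposal is correct and matches the paper's treatment: the paper offers no proof at all beyond the preceding sentence ``It is easily checked that the above expresses the Arnoux-Yoccoz map in explicit coordinates as an affine diffeomorphism,'' so your outline simply fleshes out that routine verification (edge-pair bookkeeping modulo $\alpha^3+\alpha^2+\alpha=1$, plus comparison with Hubbard's presentation). There is nothing to add.
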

%-----------------------

In what follows, we often informally identify $\mathcal P$ with $X$.  
%--------------------------------- Figure Hubbard Fig 8.3.12 partitioned with `cylinders' of map;  and its image  ----------------------------------------------
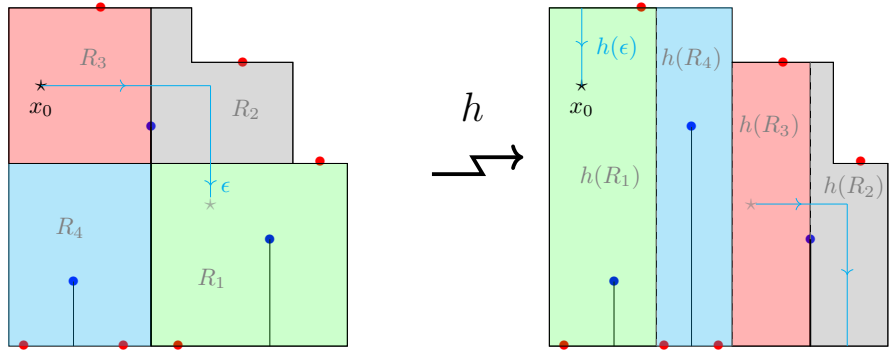
\begin{figure}[h]
\scalebox{0.9}{
\noindent
\begin{tabular}{lcr}
%--------------------------------------------- Initial `surface'
\begin{tikzpicture}[x=5cm,y=5cm] 
%make outline
\draw   (0,0)--(0,1)--(\alp, 1)--(\alp, 0.839) --(0.839, 0.839)--(0.839, \alp)--(1, \alp) -- (1,0) --cycle; 
%mark blue singularity
% \foreach \x/\y in {0.191/0.191, 0.420/0.648, 0.771/0.316%} { \node at (\x,\y) [blue]{$\bullet$}; } 
%fixed?
 \foreach \x/\y in {0.191/0.352, 0.420/0.648, 0.771/0.191%
} { \node at (\x,\y) [blue]{$\bullet$}; } 
%mark red singularity
 \foreach \x/\y in {0.339/0, 0.5/0, 0.272/1, 0.691/0.839, 0.920/0.54, 0.044/0%
} { \node at (\x,\y) [red]{$\bullet$}; } 
% slits to blue singularities as thin lines (may want to use colors to help show identifications
\draw[thin] (0.191,0)--(0.191, 0.352);        
\draw[thick] (0.420, 0)--(0.420, 0.648); 
\draw[thin] (0.771, 0)--(0.771, 0.191); 
%marking the regions R_i 
%                two of them
\draw[opacity = 0.1, fill=cyan, fill opacity = 0.3]   (0,0)--(0, \alp)--(0.420, \alp)--(0.420, 0)--cycle;
%\draw[fill=red, fill opacity = 0.3]   (0, \alp)--(0.420, \alp)--(0.420, 1)-- (0,1)--cycle;
\draw[fill=green, fill opacity = 0.2]   (0.420, 0)--(0.420, \alp)--(1, \alp)--(1, 0) -- cycle;
\draw[fill=red, fill opacity = 0.3]   (0, \alp)--(0.839, \alp) -- (0.839, 0.839) --(\alp, 0.839)--(\alp, 1)--(0,1)--cycle;%(0.420, 1)-- (\alp, 1)--(0,1) -- (0, \alp)--cycle;%(\alp, 0.839) -- (0.839, 0.839) -- (0.839, \alp)--cycle;
%label three! regions 
\node at (0.2, 0.45) [gray] {$R_3$};
\node at (0.35, 0.87) [gray] {$R_2$};
%\node at (0.7, 0.7) [gray] {$R_2$};
\node at (0.6, 0.2) [gray] {$R_1$};
%Basepoint, in order 2 orbit 
\node at (0.0957439,0.771845) [black]{$\star$};
\node at (0.0957439,0.7) [black]{$x_0$};
%\epsilon
\draw[thin,->-=0.5, cyan] (0.095,0.77)--(0.595744, 0.77);
\draw[thin,->-=0.9, cyan] (0.595744, 0.77)--(0.595744, 0.44);
\node at (0.64,0.47) [cyan]{$\epsilon$};
\node at (0.595744,0.419643) [gray, fill opacity = 0.6]{$\star$};
% \foreach \x/\y in {0.0957439/0.771845, 0.595744/0.419643%
%} { \node at (\x,\y) [black]{$\star$}; } %(\frac{\alpha-\alpha^2}{2(1+\alpha^2)},\frac{\alpha^{-1}}{\alpha^{-2}-1})
\end{tikzpicture}
&%----------------------------------------- just a labelled arrow
\begin{tikzpicture}[x=1.5cm,y=5cm] 
\node at (0, 0) {\phantom{here}};
\draw[->, ultra thick] (0.3, 0.5)--(0.8, 0.5) -- (.7, 0.55) -- (1.2,0.55);
\node at (.7, .7) {\huge{$h$}};
\end{tikzpicture}
&
%--------------------------------------- Image version 
\begin{tikzpicture}[x=5cm,y=5cm] 
%make outline
\draw   (0,0)--(0,1)--(\alp, 1)--(\alp, 0.839) --(0.839, 0.839)--(0.839, \alp)--(1, \alp) -- (1,0) --cycle; 
%mark blue singularity
% \foreach \x/\y in {0.191/0.191, 0.420/0.648, 0.771/0.316%
%} { \node at (\x,\y) [blue]{$\bullet$}; } 
%Fixed?
 \foreach \x/\y in {0.191/0.352, 0.420/0.648, 0.771/0.191%
} { \node at (\x,\y) [blue]{$\bullet$}; } 
%mark red singularity
 \foreach \x/\y in {0.339/0, 0.5/0, 0.272/1, 0.691/0.839, 0.920/0.54, 0.044/0%
} { \node at (\x,\y) [red]{$\bullet$}; } 
% slits to blue singularities as thin lines (may want to use colors to help show identifications
\draw[thin] (0.191,0)--(0.191, 0.352);        
\draw[thick] (0.420, 0)--(0.420, 0.648); 
\draw[thin] (0.771, 0)--(0.771, 0.191);  
%marking the images of the regions R_i 
\draw[opacity = 0.1, fill=cyan, fill opacity = 0.3]    (0.316,0)--(0.316, 1)--(\alp, 1)--(\alp, 0)--cycle;
%\draw[fill=red, fill opacity = 0.3]   (\alp, 0)--(\alp, 0.839) --(0.771, 0.839)-- (0.771, 0)--cycle;
\draw[fill=green, fill opacity = 0.2]  (0, 0)--(0, 1)--(0.316, 1)--(0.316,0) -- cycle;
\draw[fill=red, fill opacity = 0.3]    (\alp, 0)--(1,0)--(1, \alp)--(0.839, \alp)-- (0.839, 0.839)--(\alp, 0.839)--cycle;%(0.771, 0)--(0.771, 0.839)-- (0.839, 0.839) -- 
%\draw[thin, dashed] [gray] (0.316,0)--(0.316, 1);   
%\draw[thin, dashed] [gray] (\alp, 0)--(\alp, 0.839); 
%\draw[thin, dashed] [gray] (0.771, 0.316)--(0.771,  0.839); 
%label four regions 
\node at (0.18, 0.5) [gray] {$h(R_1)$};
\node at (0.42, 0.85) [gray] {$h(R_3)$};
\node at (0.7, 0.65) [gray] {$h(R_2)$};
%\node at (0.9, 0.47) [gray] {$h(R_2)$};
%Basepoint, in order 2 orbit 
\node at (0.0957439,0.771845) [black]{$\star$};
\node at (0.0957439,0.7) [black]{$x_0$};
%image of Basepoint, in order 2 orbit 
\node at (0.595744,0.419643) [gray, fill opacity = 0.6]{$\star$};
%image of path connecting them
\draw[thin,->-=0.5, cyan] (0.61,0.42)--(0.88,0.42);
\draw[thin,->-=0.5, cyan] (0.88,0.42)--(0.88,0);
\draw[thin,->-=0.5, cyan] (0.0957439,1)--(0.0957439,0.77);
\node at (0.2,0.88) [cyan]{$h(\epsilon)$};
\end{tikzpicture}
% $$
%
\end{tabular}
}
\caption{The Arnoux-Yoccoz map $h: X \to X$ as an affine diffeomorphism.  The map is given by  stretching vertically by a factor of $1/\alpha$ and contracting horizontally by a factor of $\alpha$; thereafter the various image pieces are translated into position.   Basepoint $x_0$ is marked with a dark star, it is an element of the unique orbit of length two of the map; its image (and preimage) is marked with a light star, see Lemma~\ref{l:orbit2}.  Paths joining these two points,  $\epsilon$ and its image under $h$, both here in cyan,  are used in the proof of Proposition~\ref{p:hatImages}.    %{\color{red} NB This is not the usual orientation for pA:  should stretch horizontally and contract vertically.   However, this is just the inverse, so no problem.  This agrees with Arnoux's presentation of the map, see [\cite{A}, Figure~6.]}
}%
\label{f:ayAsAffDiffeo}%
\end{figure}
%-------------------------------------------------------------------------------------------
 
 Elementary calculations verify the following, see Figure~\ref{f:ayAsAffDiffeo}.
%-----------------------
\begin{Lem}\label{l:orbit2}   The map $h$ fixes each of the singularities and has no other fixed point.  Furthermore,   $h$ has a unique orbit of length two, to which $x_0 =   (\frac{\alpha-\alpha^2}{2(1+\alpha^2)},\frac{1}{\alpha^{-1}-\alpha})$ belongs.
\end{Lem}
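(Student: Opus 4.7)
The entire statement reduces to a finite linear calculation. Indeed, $h$ has constant Jacobian $J = \mathrm{diag}(\alpha,\alpha^{-1})$ on each of the four pieces $R_i$, so $h^k$ has constant Jacobian $J^k$ on every subregion on which it is affine. Since $\alpha \ne 1$, the matrix $I - J^k$ is invertible for all $k \ge 1$, hence $h^k(p)=p$ has at most one solution on each piece of linearity. In particular there are at most four candidate fixed points of $h$, one per $R_i$, and at most sixteen candidate solutions of $h^2(p)=p$, one per non-empty $R_i \cap h^{-1}(R_j)$.

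For the fixed-point assertion, I would solve $(I-J)p = v_i$ in each region, using $\alpha^3+\alpha^2+\alpha = 1$ (equivalently $\alpha-\alpha^4 = 1-\alpha$) to simplify. In each of the four cases the unique candidate is seen to lie strictly outside the interior of $R_i$, so $h$ has no fixed points on the smooth part of $X$. As an orientation preserving affine diffeomorphism, $h$ permutes the two cone points; to rule out a swap I would trace a chosen red representative on $\partial\mathcal P$ through the appropriate piecewise formula for $h$ and verify that its image, after boundary identification, is again a red representative. This forces $h(p_{\mathrm{red}}) = p_{\mathrm{red}}$ and consequently $h(p_{\mathrm{blue}}) = p_{\mathrm{blue}}$.

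For the period-two assertion, existence is a direct computation. The inequalities $\frac{\alpha-\alpha^2}{2(1+\alpha^2)} < \frac{\alpha+\alpha^2}{2}$ and $\frac{1}{\alpha^{-1}-\alpha} > \alpha$, both immediate from $0<\alpha<1$, place $x_0$ in $R_3$. Applying $h\vert_{R_3}$ and simplifying via $\alpha^3 = 1-\alpha-\alpha^2$ produces an explicit point that lies in $R_1$; applying $h\vert_{R_1}$ and simplifying further returns $x_0$. Hence $\{x_0, h(x_0)\}$ is an orbit of length two. For uniqueness, on each non-empty $R_i \cap h^{-1}(R_j)$ the equation $(I - J^2)p = w_{ij}$ has at most one solution for the appropriate translation $w_{ij}$; I would discard those that coincide with a fixed singularity or fall outside their sub-piece, and confirm that the only remaining pair is $x_0 \in R_3 \cap h^{-1}(R_1)$ together with $h(x_0) \in R_1 \cap h^{-1}(R_3)$.

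The main obstacle is the bookkeeping in the uniqueness step, namely checking that the sixteen candidate subcases for period-two points really do collapse to the single orbit described. Each subcase is a linear system whose only algebraic content is the cubic $\alpha^3+\alpha^2+\alpha-1=0$, so the verification is mechanical; the actual work lies in organizing it compactly and in making sure that the edge identifications of $\mathcal P$ do not secretly produce extra period-two points on the glued boundary.
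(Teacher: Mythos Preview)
Your approach is correct and is precisely the elementary calculation the paper alludes to; the paper itself offers no argument beyond the sentence ``Elementary calculations verify the following, see Figure~\ref{f:ayAsAffDiffeo}.'' Your plan---exploiting that $I-J^k$ is invertible (since $\alpha^k\neq 1$) to reduce each affine piece to a single candidate, then checking membership---is exactly the natural way to carry out that verification.
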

%----------------------- 

%-------------------------------Homology generators on zippered rectangle ---------------------------------------------------
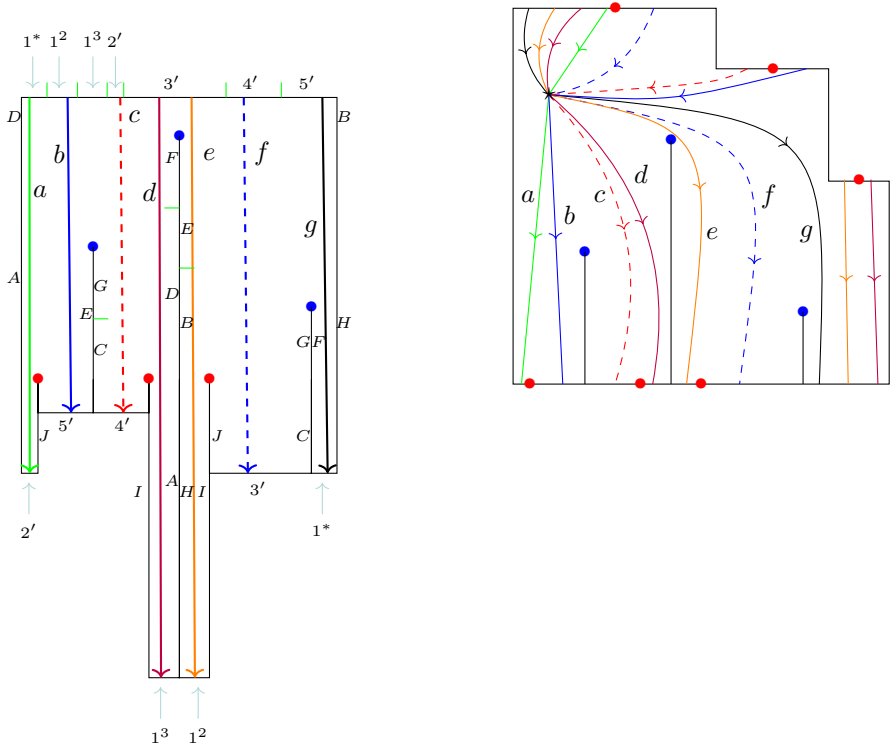
\begin{figure}[h]
\begin{tabular}{lcr}
\scalebox{1}{
%--------------------------------------------- Initial `surface'
\begin{tikzpicture}[x=5cm,y=5cm] 
%make outline
\draw   (0,0)--(0,0.75)--(0.839,0.75)--(0.839,-0.25) -- (0.771,-0.25)--(0.771,0)--(0.771,-0.25)--(0.5,-0.25)--(0.5,0)--(0.5,-0.794)--(0.42,-0.794)--(0.42,0)--(0.42,-0.794)--(0.339,-0.794)--(0.339,0)--(0.339,-0.089)--(0.191,-0.089)--(0.191,0)--(0.191,-0.089)--(0.044,-0.089)--(0.044,0)--(0.044,-0.25)--(0,-0.25)--cycle; 
%mark blue singularity
 \foreach \x/\y in {0.191/0.352, 0.420/0.648, 0.771/0.191%
} { \node at (\x,\y) [blue]{$\bullet$}; } 
%mark red singularity
 \foreach \x/\y in {0.044/0, 0.339/0, 0.5/0
} { \node at (\x,\y) [red]{$\bullet$}; } 
% slits to blue singularities as thin lines 
\draw[thin] (0.191,0)--(0.191, 0.352);        
\draw[thin] (0.420, 0)--(0.420, 0.648); 
\draw[thin] (0.771, 0)--(0.771, 0.191); 
\draw[thin] [green](0.068,0.75)--(0.068,0.79); 
\draw[thin] [green](0.149,0.75)--(0.149,0.79); 
\draw[thin] [green](0.228,0.75)--(0.228,0.79); 
\draw[thin] [green](0.272,0.75)--(0.272,0.79); 
\draw[thin] [green](0.544,0.75)--(0.544,0.79); 
\draw[thin] [green](0.691,0.75)--(0.691,0.79); 
%new nodes
%1s
\node at (0.03,0.75)[pin={[pin edge=<-, pin distance=12pt]90:{\tiny{$1^*$}}}] {};
\node at (0.8,-0.25)[pin={[pin edge=<-, pin distance=12pt]270:{\tiny{$1^*$}}}] {};
\node at (0.1,0.75)[pin={[pin edge=<-, pin distance=12pt]90:{\tiny{$1^2$}}}] {};
\node at (0.47,-0.794)[pin={[pin edge=<-, pin distance=12pt]270:{\tiny{$1^2$}}}] {};
\node at (0.19,0.75)[pin={[pin edge=<-, pin distance=12pt]90:{\tiny{$1^3$}}}] {};
\node at (0.37,-0.794)[pin={[pin edge=<-, pin distance=12pt]270:{\tiny{$1^3$}}}] {};
%2
\node at (0.25,0.75)[pin={[pin edge=<-, pin distance=12pt]90:{\tiny{$2'$}}}] {};
\node at (0.02,-0.25)[pin={[pin edge=<-, pin distance=12pt]270:{\tiny{$2'$}}}] {};
%3
\node at (0.4, 0.79){\tiny{$3'$}};
\node at (0.63, -0.29) {\tiny{$3'$}};
%   4  
\node at (0.61, 0.79) {\tiny{$4'$}};
\node at (0.12, -0.12) {\tiny{$5'$}};
%   5 
\node at (0.76, 0.79) {\tiny{$5'$}};
\node at (0.27, -0.12) {\tiny{$4'$}};
%a
\draw[thick,->][green] (0.0214,0.75)--(0.0223,-0.25);
\node at (0.05, 0.5) {$a$}; 
%b
\draw[thick,->][blue] (0.1228,0.75)--(0.1321,-0.089);
\node at (0.1,0.6) {$b$};
%c 
\draw[thick,->, dashed][red] (0.26277,0.75)-- (0.2711,-0.089);
\node at (0.3,0.7) {$c$};
%d
\draw[thick,->][purple] (0.36638,0.75)--(0.3712,-0.794);
\node at (0.34,0.5) {$d$};
%e
\draw[thick,->][orange] (0.45243,0.75)--(0.46122,-0.794);
\node at (0.5,0.6) {$e$};
%f
\draw[thick,->,dashed][blue] (0.59116,0.75)--(0.6021,-0.25);
\node at (0.64,0.6) {$f$};
%g
\draw[thick,->] (0.80013,0.75)--(0.8142,-0.25);
\node at (0.77,0.4) {$g$};
%      A
\node at (0.4, -0.27) {\tiny{$A$}};
\node at (-0.02, 0.27) {\tiny{$A$}};
%      B
\node at (0.857, 0.7) {\tiny{$B$}};
\draw[thin] [green](0.420,0.296)--(0.460,0.296); 
\node at (0.44, 0.15) {\tiny{$B$}};
%      C
\node at (0.75,-0.15){\tiny{$C$}} {};
\draw[thin] [green](0.191,0.161)--(0.231,0.161); 
\node at (0.211, 0.08) {\tiny{$C$}};
%      D
\node at (-0.02, 0.7) {\tiny{$D$}};
\draw[thin] [green](0.420,0.456)--(0.38,0.456); 
\node at (0.4, 0.228) {\tiny{$D$}};
%      E
\node at (0.172, 0.175) {\tiny{$E$}};
\node at (0.44, 0.4) {\tiny{$E$}};
%      F
\node at (0.4, 0.59) {\tiny{$F$}};
\node at (0.79, 0.1) {\tiny{$F$}};
%      G
\node at (0.211, 0.25) {\tiny{$G$}};
\node at (0.75, 0.1) {\tiny{$G$}};
%     H 
\node at (0.857, 0.15) {\tiny{$H$}};
\node at (0.44,-0.3) {\tiny{$H$}};
%     I 
\node at (0.31, -0.3) {\tiny{$I$}};
\node at (0.48,-0.3) {\tiny{$I$}};
%     J
\node at (0.52,-0.15) {\tiny{$J$}};
\node at (0.06,-0.15) {\tiny{$J$}};
\end{tikzpicture}
}%end scalebox
&\phantom{higher}
&
\raisebox{4.7 cm}{
\scalebox{1}{
\noindent
\begin{tikzpicture}[x=5cm,y=5cm] 
%make outline
\draw   (0,0)--(0,1)--(\alp, 1)--(\alp, 0.839) --(0.839, 0.839)--(0.839, \alp)--(1, \alp) -- (1,0) --cycle; 
%mark blue singularity
 \foreach \x/\y in {0.191/0.352, 0.420/0.648, 0.771/0.191%
} { \node at (\x,\y) [blue]{$\bullet$}; } 
%mark red singularity
 \foreach \x/\y in {0.044/0, 0.339/0, 0.5/0, 0.272/1, 0.691/0.839, 0.920/0.544%
} { \node at (\x,\y) [red]{$\bullet$}; } 
% slits to blue singularities as thin lines
\draw[thin] (0.191,0)--(0.191, 0.352);        
\draw[thin] (0.420, 0)--(0.420, 0.648); 
\draw[thin] (0.771, 0)--(0.771, 0.191);
%a
\draw[thin,->-=0.5][green](0.2504,1) to (0.095,0.77);
\draw[thin,->-=0.5][green] (0.095,0.77) to  (0.0223,0);
\node at (0.04,0.5) {$a$};
%f
\draw[thin,->-=0.5,dashed][blue](0.374,1) to [bend left] (0.095,0.77);
\draw[thin,->-=0.72,dashed][blue](0.095,0.77) .. controls (0.691,0.65) .. (0.6021,0);
\node at (0.68,0.5) {$f$};
%d
\draw[thin,->-=0.5][purple](0.1819,1) to [bend right] (0.095,0.77);
\draw[thin,->-=0.5][purple] (0.095,0.77)to [bend left] (0.3712,0);
\draw[thin,->-=0.5][purple] (0.9515,0.544)--(0.97,0);
\node at (0.34,0.56) {$d$};
%g
\draw[thin,->-=0.5](0.042,1) to [bend right] (0.095,0.77);
\draw[thin,->-=0.5] (0.095,0.77) .. controls (0.839,0.7) ..  (0.8142,0);
\node at (0.78,0.4) {$g$};
%e
\draw[thin,->-=0.5][orange] (0.111,1) to [bend right] (0.095,0.77);
\draw[thin,->-=0.5][orange] (0.095,0.77) .. controls (0.54,0.68) .. (0.46122,0);
\draw[thin,->-=0.5][orange] (0.881,0.544)--(0.891,0);
\node at (0.53,0.4) {$e$};
%b
\draw[thin,->-=0.5][blue] (0.78,0.839) .. controls (0.42,0.75) .. (0.095,0.77);
\draw[thin,->-=0.5][blue] (0.095,0.77) to  (0.1321,0);
\node at (0.15,0.45) {$b$};
%c 
\draw[thin,->-=0.5, dashed][red] (0.623,0.839) .. controls (0.544,0.8) ..  (0.095,0.77);
\draw[thin,->-=0.5, dashed][red] (0.095,0.77) to [bend left] (0.2711,0);
\node at (0.23,0.5) {$c$};
%basepoint
\node at (0.0957439,0.771845) [black]{$\star$};
\end{tikzpicture}
}%end raisebox
}%end scalebox
\end{tabular}
\caption{Homology generators for a zippered rectangle surface punctured at its singularities are represented by curves appropriately respecting the natural partition of the zippered rectangle.   Left:  zippered rectangle is formed by cut-and-paste operations on the model of Figure~\ref{f:hubbRepn}.    Each curve is closed off by a segment along the top horizontal edge. Right:  homologous curves, shown on the original model, passing through $x_0$ of Lemma~\ref{l:orbit2}.}
\label{f:zippered}
\end{figure} 
%----------------------------------------------------------------------------------

%-----------------------
\begin{Lem}\label{l:genPiOneOnX}  On the Arnoux-Yoccoz surface $X$ let the closed curves $a, b, c, d, e, f, g$  passing through $x_0$  be as given Figure~\ref{f:zippered},  then the classes of $\{a, b, c, d, e, f, g\}$  generate the fundamental group  $\pi_1(X,x_0)$.   
\end{Lem}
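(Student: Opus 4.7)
The plan is to prove something slightly stronger: that the seven curves freely generate the fundamental group of the punctured surface $X^\ast = X \setminus \{p_1, p_2\}$, where $p_1, p_2$ are the two cone points (the single red and the single blue equivalence class of Figure~\ref{f:hubbRepn}). The lemma then follows immediately because the inclusion $X^\ast \hookrightarrow X$ induces a surjection $\pi_1(X^\ast, x_0) \twoheadrightarrow \pi_1(X, x_0)$; filling in the two punctures only kills the small loops around them. Since $X$ is closed of genus three with $\chi(X) = -4$, we have $\chi(X^\ast) = -6$, and so $\pi_1(X^\ast, x_0)$ is free of rank $7$, matching the number of curves listed.

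To exhibit $\{[a],\ldots,[g]\}$ as a free basis for $\pi_1(X^\ast, x_0)$, I first pass from the slitted polygon $\mathcal{P}$ of Figure~\ref{f:hubbRepn} to the zippered rectangle representation shown on the left of Figure~\ref{f:zippered}. The two presentations differ only by a translation-preserving cut-and-paste, so they describe the same translation surface. In the zippered rectangle, the top horizontal edge is subdivided by the seven green tick marks into seven intervals, each of which is the top of a vertical strip; the two cone points appear at the re-entrant corners of the zipper pattern, away from the interiors of the strips.

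Next, one deforms $X^\ast$ onto the $1$-complex $\Gamma$ consisting of the top horizontal edge together with one vertical arc running down the centre of each of the seven strips and continuing through the identifications to its matching partner on the top. Because both singularities sit at strip boundaries rather than centres, the straight-line homotopy pushing each strip horizontally onto its central spine takes place entirely inside $X^\ast$. Contracting the top edge to the basepoint $x_0$ collapses $\Gamma$ to a wedge of seven circles; by inspection of the left side of Figure~\ref{f:zippered}, these seven loops are exactly $a,b,c,d,e,f,g$. The right side of the figure simply redraws those same loops on the original polygon model via the cut-and-paste, with all seven now arranged to pass through $x_0$.

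The main piece of book-keeping, and the likeliest source of error, is the explicit matching of the seven spine loops of the zippered rectangle with the labelled curves $a,\ldots,g$ in both pictures: one has to read the edge identifications along the top and bottom of the zippered rectangle correctly and check that each spine loop, after closing off along the top edge and transporting to the polygon, is freely homotopic in $X^\ast$ to the corresponding coloured arc on the right. Granted that verification (which is visual but tedious), $[a],\ldots,[g]$ are a free basis of $\pi_1(X^\ast, x_0)$ and hence a generating set of $\pi_1(X, x_0)$, as required.
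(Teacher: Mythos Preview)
Your approach is correct and genuinely different from the paper's.  You prove the stronger statement that $a,\ldots,g$ freely generate $\pi_1(X^\ast,x_0)$ for the twice-punctured surface, then deduce generation of $\pi_1(X,x_0)$ by surjectivity of the inclusion-induced map.  The paper instead works directly on the closed surface: it changes basepoint to the red singularity $x_1$ via a path $\delta$, reads off a generating set for $\pi_1(X,x_1)$ from the $1$-skeleton of Bowman's triangulation (Figure~\ref{f:superTri}), expresses each $\delta\gamma\delta^{-1}$, $\gamma\in\{a,\ldots,g\}$, as an explicit word in those generators, and then algebraically inverts to show the conjugates generate.  The paper only obtains free generation of $\pi_1(\hat X)\cong\pi_1(X^\ast)$ later, in Lemma~\ref{l:genOnXhat}, by combining Lemma~\ref{l:genPiOneOnX} with the abstract Lemma~\ref{l:generateFreely}; your route reaches that stronger conclusion in one step and bypasses the seven-line word computations entirely.

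One caveat on your retract: as drawn, the seven top subintervals (cut by the green tick marks) do \emph{not} coincide with the seven bottom tabs, and three internal slits persist in the zippered-rectangle picture with the blue cone point sitting at their tips.  So the phrase ``straight-line homotopy pushing each strip horizontally onto its central spine'' is a bit too glib: there is no global product structure into seven clean vertical rectangles.  What does work is to observe that the seven arcs are in one-to-one correspondence with the seven \emph{bottom} tabs, that every cone-point preimage lies on a tab boundary or a slit (never in a tab interior), and that the complement of $\Gamma$ in $X^\ast$ is a single open disk; from this the retract follows.  You flag the matching step as the likely source of error, which is fair, but the geometric obstruction to a naive horizontal collapse is the slit/tab misalignment rather than the labelling.
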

%-----------------------

\begin{proof}  Let $x_1\in X$ be the point of the `red' singularity and let $\delta$ be the straight path from $x_1$ to $x_0$, as shown in the right side of Figure~\ref{f:superTri}.    We show that the classes of the conjugate by   $\delta$ of $\{a, b, c, d, e, f, g\}$ generate the fundamental group  $\pi_1(X,x_1)$.  Since such conjugations give isomorphisms of fundamental groups,  the result follows.   

Let the curves $t,u,v,w,x,y,z$ be given as in Figure~\ref{f:superTri}.     Since generators for the fundamental group are found on the 1-skeleton of $X$,   the classes of $\{v^{-1}t,uv,wu^{-1},y x, zt, yw^{-1},zx\}$ generate the fundamental group  $\pi_1(X,x_1)$.     The cyclic relation given by the gluing of the 2-skeleton to the 1-skeleton shows that we can delete any of one these and still have a generating set.

Treating each of $a, b, c, d, e, f, g$ by considering it placed on the  triangulation of $X$ given in Figure~\ref{f:superTri}, we find   \[
\begin{aligned}
\delta a \delta ^{-1}&=(t^{-1}v)(z x)^{-1}(wy^{-1}),\\
\delta  b \delta ^{-1}&=\delta a \delta ^{-1}(uv)^{-1},\\   
\delta  c \delta ^{-1}&=(uv)(wu^{-1})(yx)^{-1} (t^{-1}v)(uv)^{-1},\\
\delta  d \delta ^{-1}&=(uv)(wu^{-1})(yx)^{-1} \delta a \delta ^{-1},\\
\delta  e \delta ^{-1}&=(uv)(wu^{-1})(z x)^{-1}(wu^{-1})(z x)^{-1}(wy^{-1}),\\
\delta  f \delta ^{-1}&=(uv)(wu^{-1}) (z x)^{-1} (wu^{-1}),\\
\delta g \delta ^{-1}&=(uv)(wu^{-1}) (z x)^{-1} (wy^{-1}) (yx)(wu^{-1})^{-1}(uv)^{-1}.
\end{aligned}
\]

But, we can now algebraically solve and find 
\[
\begin{aligned}
        t^{-1}v&=\delta a \delta ^{-1} \,  (\delta e \delta ^{-1})^{-1}\, \delta f \delta ^{-1},\\
              uv&= (\delta b \delta ^{-1})^{-1} \,\delta a \delta ^{-1},\\
      wu^{-1}&=(uv)^{-1} \,\delta gda^{-2}\delta^{-1}\, (t^{-1}v),\\
      zx&=(wu^{-1})\,(\delta f \delta^{-1})^{-1}\, (uv)(wu^{-1}),\\
              yx&= (uv)^{-1}  (t^{-1}v)\,(\delta c \delta^{-1})^{-1}\, (uv)(wu^{-1}) \,,\\
      wy^{-1}&= (t^{-1}v)^{-1} (zx)\,\delta a \delta ^{-1}.
\end{aligned}
\]
Therefore the conjugates of the classes of $a,b,c,d,e,f,g$ also generates $\pi_1(X,x_1)$.  The result thus holds.
\end{proof}

%-----------------------------Triangulation and superimposed ----------------------------------------------------- 
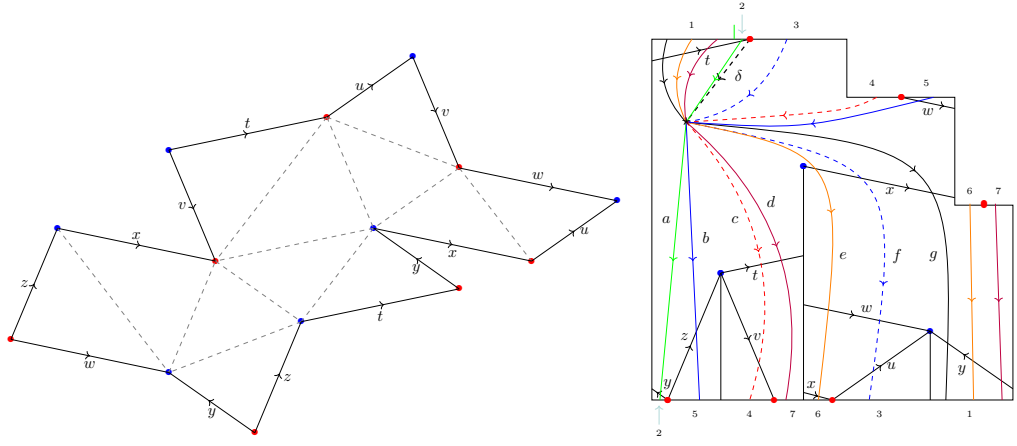
\begin{figure}[h]
\scalebox{0.6}{
\noindent
\begin{tabular}{lr} 
\begin{tikzpicture}[x=7cm,y=7cm] 
\coordinate (b1) at (0.191,0.352);
\coordinate (b2) at (0.42, 0.648);
\node at (b1) [blue]{$\bullet$};
\node at (b2) [blue]{$\bullet$};
\coordinate (b3) at (-0.228,0.896);
\node at (b3) [blue]{$\bullet$};
\coordinate  (b4) at (0.544,1.191);
\node at (b4) [blue]{$\bullet$};
\coordinate (b5) at (1.191,0.735);
\node at (b5) [blue]{$\bullet$};
\coordinate (b6) at (-0.228,0.191);
\node at (b6) [blue]{$\bullet$};
\coordinate (b7) at (-0.5805,0.648);
\node at (b7) [blue]{$\bullet$};
%
%mark red singularity
 %\foreach \x/\y in {0.044/0, 0.272/1, 0.691/0.839, 0.920/0.544%
%} { \node at (\x,\y) [red]{$\bullet$}; } 
\coordinate (r1) at (0.044,0);
\coordinate (r2) at (0.272,1);
\coordinate (r3) at (0.691,0.839);
\coordinate (r4) at (0.92,0.544);
\node at (r1) [red]{$\bullet$};
\node at (r2) [red]{$\bullet$};
\node at (r3) [red]{$\bullet$};
\node at (r4) [red]{$\bullet$};
\coordinate (r5) at (-0.0805,0.544);
\node at (r5) [red]{$\bullet$};
\coordinate (r6) at (-0.728,0.296);
\node at (r6) [red]{$\bullet$};
\coordinate (r7) at (0.691,0.456);
\node at (r7) [red]{$\bullet$};
\path (b2) edge[thin, gray, dashed] (r5);
\path (r5) edge[thin,-<-=0.5] node[left] {$v$} (b3);
\path (b3)  edge[thin,->-=0.5] node[above] {$t$} (r2);
\path (r2) edge[thin, gray, dashed] (r5);
\path (r2) edge[thin, gray, dashed] (b2);
\path (r2) edge[thin, gray, dashed] (r3);
\path (r3) edge[thin, gray, dashed] (b2);
\path (b4) edge[thin,->-=0.5] node[right] {$v$} (r3); 
\path (b4) edge[thin,-<-=0.5] node[left] {$u$} (r2);
\path (r3) edge[thin, gray, dashed] (r4);
\path (r3) edge[thin,->-=0.6] node[above] {$w$} (b5);
\path (r4) edge[thin,->-=0.5] node[right] {$u$} (b5);
\path (b2) edge[thin,->-=0.5] node[below] {$x$} (r4);
\path (b2) edge[thin, gray, dashed] (b1); 
\path (r5)   edge[thin, gray, dashed] (b1); 
\path (b1) edge[thin, gray, dashed] (b6); 
\path (b6) edge[thin, gray, dashed] (r5); 
\path (r5) edge[thin,-<-=0.5] node[above] {$x$} (b7);
\path (b7) edge[thin, gray, dashed] (b6); 
\path (r6) edge[thin,->-=0.5] node[left] {$z$} (b7);
\path (r6) edge[thin,->-=0.5] node[below] {$w$} (b6);
\path (b6) edge[thin,-<-=0.5] node[below] {$y$} (r1);
\path (b1) edge[thin,-<-=0.5] node[right] {$z$} (r1);
\path (r7) edge[thin,->-=0.5] node[below] {$y$} (b2);
\path (r7) edge[thin,-<-=0.5] node[below] {$t$} (b1);
\end{tikzpicture}
&
\begin{tikzpicture}[x=8cm,y=8cm] 
\draw   (0,0)--(0,1)--(\alp, 1)--(\alp, 0.839) --(0.839, 0.839)--(0.839, \alp)--(1, \alp) -- (1,0) --cycle; 
%mark blue singularity
 \foreach \x/\y in {0.191/0.352, 0.420/0.648, 0.771/0.191%
} { \node at (\x,\y) [blue]{$\bullet$}; } 
% slits to blue singularities as thin lines
\draw[thin] (0.191,0)--(0.191, 0.352);        
\draw[thin] (0.420, 0)--(0.420, 0.648); 
\draw[thin] (0.771, 0)--(0.771, 0.191); 
%           identifications  horizontal sides 
%  1
\draw[thin] [green](0.228,1)--(0.228,1.04); 
\node at (0.11, 1.04){\tiny{$1$}};
\node at (0.88, -0.04) {\tiny{$1$}};
%  2
\node at (0.25,1)[pin={[pin edge=<-, pin distance=12pt]90:{\tiny{$2$}}}] {};
\node at (0.02,0)[pin={[pin edge=<-, pin distance=12pt]270:{\tiny{$2$}}}] {};
%  3
\node at (0.4, 1.04){\tiny{$3$}};
\node at (0.63, -0.04) {\tiny{$3$}};
%   4  
\node at (0.61, 0.879) {\tiny{$4$}};
\node at (0.12, -0.04) {\tiny{$5$}};
%   5 
\node at (0.76, 0.879) {\tiny{$5$}};
\node at (0.27, -0.04) {\tiny{$4$}};
%   6 
\node at (0.88, 0.58) {\tiny{$6$}};
\node at (0.46, -0.04) {\tiny{$6$}};
%   7
\node at (0.96, 0.58) {\tiny{$7$}};
\node at (0.39, -0.04) {\tiny{$7$}};
\coordinate (b1) at (0.191,0.352);
\coordinate (b2) at (0.42, 0.648);
\node at (b1) [blue]{$\bullet$};
\node at (b2) [blue]{$\bullet$};
\coordinate (b4) at (0.771,0.191);
\coordinate (b7) at (-0.5805,0.648);
%intersection coordinates
\coordinate (t1) at (0,0.94);
\coordinate (t2) at (0.42,0.4);
\coordinate (w1) at (0.839,0.808);
\coordinate (w2) at (0.42,0.264);
\coordinate (x1) at (0.839,0.5608);
\coordinate (x2) at (0.42,0.0208);
\coordinate (y1) at (0,0.031);
\coordinate (y2) at (1,0.031);
\coordinate (r1) at (0.044,0);
\coordinate (r2) at (0.272,1);
\coordinate (r3) at (0.691,0.839);
\coordinate (r4) at (0.92,0.544);
\coordinate (r5) at (0.339,0);
\coordinate (r6) at (-0.728,0.296);
\coordinate (r8) at (0.5,0);
\path (r5) edge[thin,-<-=0.5] node[right] {$v$} (b1);
\path (t1) edge[thin,->-=0.5] node[right, below] {$\;\;\;t$} (r2);
\path (r3) edge[thin,->-=0.8] node[right, below] {$w$} (w1);
\path (w2) edge[thin, ->-=0.4] node[left, above] {$w$} (b4);
\path (r8) edge[thin,->-=0.5] node[right] {$u$} (b4);
\path (b2) edge[thin,->-=0.7] node[right, below] {$\phantom{mo}x$} (x1);
\path (x2) edge[thin, ->-=0.5] node[left, above] {$x_{\phantom{12}}$} (r8);
\path (y1) edge[thin,-<-=0.5] node[right, above] {$\;\;\;\,y$} (r1);
\path (b1) edge[thin,-<-=0.6] node[left] {$z$} (r1);
\path (y2) edge[thin,->-=0.6] node[left, below] {$y_{\phantom{123}}$} (b4); 
\path (t2) edge[thin,-<-=0.7] node[left, below] {$t_{\phantom{12}}$} (b1);
%Basepoint, in order 2 orbit 
\node at (0.0957439,0.771845) [black]{$\star$};
%delta, path from x_1 to x_0
\draw[->-=0.5, thick, dashed] (0.272,1)--(0.0957439,0.771845); 
\node at  (0.24,0.9) {$\delta$};
%%%pasting in curves
%a
\draw[thin,->-=0.5][green](0.2504,1) to (0.095,0.77);
\draw[thin,->-=0.5][green] (0.095,0.77) to  (0.0223,0);
\node at (0.04,0.5) {$a$};
%f
\draw[thin,->-=0.5,dashed][blue](0.374,1) to [bend left] (0.095,0.77);
\draw[thin,->-=0.72,dashed][blue](0.095,0.77) .. controls (0.691,0.65) .. (0.6021,0);
\node at (0.68,0.4) {$f$};
%d
\draw[thin,->-=0.5][purple](0.1819,1) to [bend right] (0.095,0.77);
\draw[thin,->-=0.5][purple] (0.095,0.77)to [bend left] (0.3712,0);
\draw[thin,->-=0.5][purple] (0.9515,0.544)--(0.97,0);
\node at (0.33,0.55) {$d$};
%g
\draw[thin,->-=0.5](0.042,1) to [bend right] (0.095,0.77);
\draw[thin,->-=0.5] (0.095,0.77) .. controls (0.839,0.7) ..  (0.8142,0);
\node at (0.78,0.4) {$g$};
%e
\draw[thin,->-=0.5][orange] (0.111,1) to [bend right] (0.095,0.77);
\draw[thin,->-=0.5][orange] (0.095,0.77) .. controls (0.54,0.68) .. (0.46122,0);
\draw[thin,->-=0.5][orange] (0.881,0.544)--(0.891,0);
\node at (0.53,0.4) {$e$};
%b
\draw[thin,->-=0.5][blue] (0.78,0.839) .. controls (0.42,0.75) .. (0.095,0.77);
\draw[thin,->-=0.5][blue] (0.095,0.77) to  (0.1321,0);
\node at (0.15,0.45) {$b$};
%c 
\draw[thin,->-=0.5, dashed][red] (0.623,0.839) .. controls (0.544,0.8) ..  (0.095,0.77);
\draw[thin,->-=0.5, dashed][red] (0.095,0.77) to [bend left] (0.2711,0);
\node at (0.23,0.5) {$c$};
%basepoint
\node at (0.0957439,0.771845) [black]{$\star$};
%putting red bullets last, so as to cover
\node at (r1) [red]{$\bullet$};
\node at (r2) [red]{$\bullet$};
\node at (r3) [red]{$\bullet$};
\node at (r4) [red]{$\bullet$};
%mark red singularity
 \foreach \x/\y in {0.044/0, 0.339/0, 0.5/0, 0.272/1, 0.691/0.839, 0.920/0.54%
} { \node at (\x,\y) [red]{$\bullet$}; } 
\end{tikzpicture}%
\end{tabular}
}
\caption{Left: a variant of Bowman's \cite{Bowman} triangulation of $X$.  Right: same, with dotted lines suppressed, superimposed on representation of $X$ given in Figure~\ref{f:hubbRepn};  the closed curves $a, \dots, g$ of Lemma~\ref{l:genPiOneOnX}; and, the path $\delta$ from $x_1$ to $x_0$ used in the proof of that lemma.}%
\label{f:superTri}%
\end{figure}
%-------------------------------------------------------------------------------------------

 \subsection{The blown-up Arnoux-Yoccoz surface, $\hat{X}$}   See \cite{BSW} for a detailed discussion of  blow-ups of translation surfaces.  The {\em blow-up of the Arnoux-Yoccoz surface} is the result of blowing up  its two singularities; this new surface $\hat{X}$ remains of genus three, but has two boundary components,  each a circle.   The {\em collapsing map},  $\mathfrak{c}:\hat{X}\to X$ is continuous and  is one-to-one off of these circles, while sending each of these to a singularity.   
  
 Fix $\hat{x}_0 \in \hat{X}$ to be the preimage of the $x_0$ under the collapsing map.   Since none of the closed curves $a, \dots, g$ passes through either singularity, we let $\hat{a}, \dots, \hat{g}$ be their respective preimages under the collapsing map.

%------------------------------- relation  ---------------------------------------------------
\begin{figure}[h]
\scalebox{1.1}{
\noindent
\begin{tikzpicture}[x=5cm,y=5cm] 
%make outline
\draw   (0,0)--(0,1)--(\alp, 1)--(\alp, 0.839) --(0.839, 0.839)--(0.839, \alp)--(1, \alp) -- (1,0) --cycle; 
%mark blue singularity
 \foreach \x/\y in {0.191/0.352, 0.420/0.648, 0.771/0.191%
} { \node at (\x,\y) [blue]{$\bullet$}; } 
%mark red singularity
 \foreach \x/\y in {0.044/0, 0.339/0, 0.5/0, 0.272/1, 0.691/0.839, 0.920/0.544%
} { \node at (\x,\y) [red]{$\bullet$}; } 
% slits to blue singularities as thin lines 
\draw[thin] (0.191,0)--(0.191, 0.352);        
\draw[thin] (0.420, 0)--(0.420, 0.648); 
\draw[thin] (0.771, 0)--(0.771, 0.191);
%           identifications  horizontal sides 
%  1
\draw[thin] [green](0.228,1)--(0.228,1.04); 
\node at (0.11, 1.04){\tiny{$1$}};
\node at (0.88, -0.04) {\tiny{$1$}};
%  2
\node at (0.25,1)[pin={[pin edge=<-, pin distance=12pt]90:{\tiny{$2$}}}] {};
\node at (0.02,0)[pin={[pin edge=<-, pin distance=12pt]270:{\tiny{$2$}}}] {};
%  3
\node at (0.4, 1.04){\tiny{$3$}};
\node at (0.63, -0.04) {\tiny{$3$}};
%   4  
\node at (0.61, 0.879) {\tiny{$4$}};
\node at (0.12, -0.04) {\tiny{$5$}};
%   5 
\node at (0.76, 0.879) {\tiny{$5$}};
\node at (0.27, -0.04) {\tiny{$4$}};
%   6 
\node at (0.88, 0.58) {\tiny{$6$}};
\node at (0.46, -0.04) {\tiny{$6$}};
%   7
\node at (0.96, 0.58) {\tiny{$7$}};
\node at (0.39, -0.04) {\tiny{$7$}};
%a
\draw[thin,->-=0.5][green](0.2504,1) to (0.095,0.77);
\draw[thin,->-=0.5][green] (0.095,0.77) to  (0.015,0);
\node at (0.04,0.5) {$a$};
%f
\draw[thin,->-=0.5,dashed][blue](0.374,1) to [bend left] (0.095,0.77);
\draw[thin,->-=0.72,dashed][blue](0.095,0.77) .. controls (0.691,0.65) .. (0.6021,0);
\node at (0.68,0.5) {$f$};
%d
\draw[thin,->-=0.5][purple](0.1819,1) to [bend right] (0.095,0.77);
\draw[thin,->-=0.5][purple] (0.095,0.77)to [bend left] (0.3712,0);
\draw[thin,->-=0.5][purple] (0.9515,0.544)--(0.97,0);
\node at (0.34,0.56) {$d$};
%g  Suppressed
%e
\draw[thin,->-=0.5][orange] (0.111,1) to [bend right] (0.095,0.77);
\draw[thin,->-=0.5][orange] (0.095,0.77) .. controls (0.54,0.68) .. (0.46122,0);
\draw[thin,->-=0.5][orange] (0.881,0.544)--(0.891,0);
\node at (0.53,0.4) {$e$};
%b
\draw[thin,->-=0.5][blue] (0.78,0.839) .. controls (0.42,0.7) .. (0.095,0.78);
\draw[thin,->-=0.5][blue] (0.095,0.77) to  (0.1321,0);
\node at (0.15,0.45) {$b$};
%c 
\draw[thin,->-=0.5, dashed][red] (0.623,0.839) .. controls (0.544,0.8) ..  (0.095,0.77);
\draw[thin,->-=0.5, dashed][red] (0.095,0.77) to [bend left] (0.2711,0);
\node at (0.23,0.5) {$c$};
%basepoint
\node at (0.0957439,0.771845) [black]{$\star$};
% partial filling (bottom)
\draw[opacity=0, fill=gray, fill opacity=0.3] (0.06,0) arc (0:170:0.02)--(0.047,0.2)--cycle;
\draw[opacity=0, fill=gray, fill opacity=0.3] (0.36,0) arc (0:170:0.02)--(0.35,0.2)--cycle;
\draw[opacity=0, fill=gray, fill opacity=0.3] (0.52,0) arc (0:170:0.02)--(0.5,0.2)--cycle;
% partial filling (top)
\draw[opacity=0, fill=gray, fill opacity=0.3] (0.25,0.98) arc (190:350:0.02)--(0.27,0.9)--cycle;
\draw[opacity=0, fill=gray, fill opacity=0.3] (0.67,0.83) arc (180:300:0.02)--(0.59,0.79)--cycle;
\draw[opacity=0, fill=gray, fill opacity=0.3] (0.9,0.53) arc (190:350:0.02)--(0.92,0.4)--cycle;
% partial filling (retracts)
\draw[opacity=0, fill=gray, fill opacity=0.3] (0.1,0.97) -- (0.1,0.94) --(0.12,0.94)--(0.12,0.97)--cycle;
\end{tikzpicture}
}%end scalebox
\caption{On $X$, a circle about $x_1$  is  homotopic relative to the singularities to $a  f^{-1}  e d^{-1} c b^{-1}$.   To see this, homotope the beginning portions of: $a$ and $b$ past the lower left gray triangle so as to collapse onto an arc (not pictured) about the left bottom representative of $x_1$;  $c$ and $d$ so as to collapse onto an arc  about the middle bottom representative of $x_1$; and,  $e$ and $f$ so as to collapse onto an arc  about the right bottom representative of $x_1$.  Also homotope
the ending portions of:  $a$ and $f$ to the top gray triangle and thereafter to collapse onto an arc about the left top  representative of $x_1$;  $e$ and $d$ past the gray square and thereafter pass the gray triangle to collapse onto an arc about the right top  representative of $x_1$;  and,  
of $c$ and $b$ so as to collapse onto an arc about the middle top representative.   %
It follows that   $\hat{a} \hat{f}^{-1} \hat{e} \, \hat{d}^{-1} \hat{c}\, \hat{b}^{-1}$ is freely homotopic on $\hat{X}$ to the blow up of $x_1$.  This is used in the proof of Lemma~\ref{l:genOnXhat}.}
\label{f:xxTryRelation}
\end{figure}
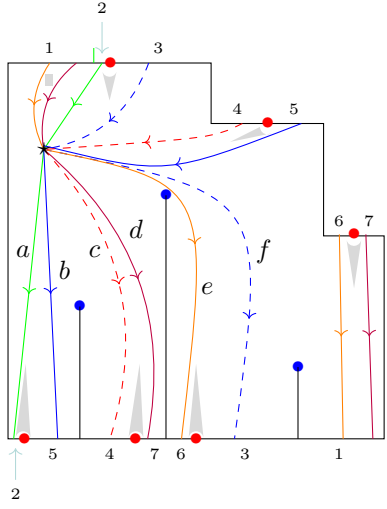 
%----------------------------------------------------------------------------------
 
%-----------------------
\begin{Lem}\label{l:genOnXhat}   The fundamental group $\pi_1(\hat{X}, \hat{x}_0)$ is freely generated by the classes of $\hat{a}, \hat{b}, \hat{c}, \hat{d}, \hat{e}, \hat{f}, \hat{g}$. 
\end{Lem}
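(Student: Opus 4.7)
The plan is to apply Lemma~\ref{l:generateFreely} with $S = X$, $g = 3$, and $d = 2$, so that $2g + d - 1 = 7$ matches the number of curves $\hat{a}, \hat{b}, \hat{c}, \hat{d}, \hat{e}, \hat{f}, \hat{g}$ in hand. To do so we must verify the two hypotheses of that lemma: (i) the images $a, b, c, d, e, f, g$ of these curves under the collapsing map $\mathfrak{c}:\hat{X}\to X$ generate $\pi_1(X, x_0)$; and (ii) for the single index $i = 1$, some word in the $\hat{\alpha}_j$ is freely homotopic on $\hat{X}$ to the blown-up circle $\mathfrak{c}^{-1}(p_2)$ above one of the two singularities.

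Hypothesis (i) is exactly Lemma~\ref{l:genPiOneOnX}, so there is nothing new to do there. For hypothesis (ii), I would use the picture established in Figure~\ref{f:xxTryRelation}: on $X$, a small loop around the red singularity $x_1$ is homotopic rel.\ the singularities to the concatenation $a f^{-1} e d^{-1} c b^{-1}$. The argument in the caption of that figure describes the required homotopy explicitly, by pushing the beginning portions of the pairs $(a,b)$, $(c,d)$, $(e,f)$ down to small arcs about the three bottom representatives of $x_1$, and analogously pushing the ending portions of $(a,f)$, $(e,d)$, $(c,b)$ up to small arcs about the three top representatives of $x_1$. Lifting this free homotopy to $\hat{X}$, where the collapsing map is an embedding away from the two boundary circles, yields that $\hat{a}\hat{f}^{-1}\hat{e}\hat{d}^{-1}\hat{c}\hat{b}^{-1}$ is freely homotopic on $\hat{X}$ to the boundary circle $\mathfrak{c}^{-1}(x_1)$.

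With both hypotheses in hand, Lemma~\ref{l:generateFreely} applies directly and gives that $\{[\hat{a}], [\hat{b}], [\hat{c}], [\hat{d}], [\hat{e}], [\hat{f}], [\hat{g}]\}$ is a minimal set of free generators of $\pi_1(\hat{X}, \hat{x}_0)$, which is the claim. The only nontrivial step is the free homotopy computation for (ii); however, since the six loops $a,b,c,d,e,f$ all pass through the basepoint $x_0$ and have endpoints clustered near the preimages of $x_1$ in $\mathcal{P}$, the combinatorial check amounts to tracing which representative of $x_1$ each endpoint approaches and in what cyclic order, exactly as depicted. I do not expect a genuine obstacle beyond carefully reading off the picture.
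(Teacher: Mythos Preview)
Your proposal is correct and follows essentially the same approach as the paper's proof: invoke Lemma~\ref{l:genPiOneOnX} for hypothesis~(i), use the homotopy of Figure~\ref{f:xxTryRelation} to verify hypothesis~(ii), and then apply Lemma~\ref{l:generateFreely}. The paper's own proof is slightly terser but structurally identical.
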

%-----------------------
\begin{proof}  By Lemma~\ref{l:genPiOneOnX},  the set of classes of $\mathfrak c (\hat{a}), \dots, \mathfrak c(\hat{g})$ generates the fundamental group of $X$.         Figure~\ref{f:xxTryRelation} shows that 
$a  f^{-1}  e d^{-1} c b^{-1}$ is freely homotopic, relative to the singularities, on $X$  to a circle about $x_1$.     It follows that $\hat{a} \hat{f}^{-1} \hat{e} \, \hat{d}^{-1} \hat{c}\, \hat{b}^{-1}$ is freely homotopic on $\hat{X}$ to the blow up of $x_1$.     The result thus follows from Lemma~\ref{l:generateFreely}.
\end{proof}

 \section{Cokernel calculation}   By Lemma~\ref{l:orbit2},  each singularity of $X$  forms a full $h$-orbit.  Let $\hat{h}: \hat{X} \to\hat{X}$ be the `lift' of $h$ as per the discussion of Subsection~\ref{ss:blowUps} on $\hat{X}$.  Recall that $\hat{h}$ is also a pseudo-Anosov homeomorphism.  
Just as we did for our closed paths, also lift the path $\epsilon$ to $\hat{\epsilon}$ on $\hat{X}$, thus passing from $\hat{x}_0$ to $\hat{h}(\hat{x}_0)$. 

%-----------------------
\begin{Prop}\label{p:hatImages}   Up to basepoint-fixing homotopy,  the  map $\hat{h}$ acts  in the following manner: \begin{equation}\label{e:hatImages}
\begin{aligned}
\hat{a} &\mapsto  \hat{\epsilon}^{-1}\hat{f}\hat{c}\hat{\epsilon},\;\;  
\hat{b} \mapsto  \hat{\epsilon}^{-1}\hat{f} \hat{d}\hat{g}^{-1}\hat{\epsilon},\;\;
\hat{c}\mapsto  \hat{\epsilon}^{-1}\hat{f} \hat{e}\hat{g}^{-1}\hat{\epsilon}, \;\; 
\hat{d}  \mapsto  \hat{\epsilon}^{-1}\hat{f}^2 \hat{c}\hat{\epsilon},\\
\hat{e} &\mapsto  \hat{\epsilon}^{-1}\hat{g}\hat{a} \hat{c}\hat{\epsilon}\;\; 
\hat{f} \mapsto  \hat{\epsilon}^{-1}\hat{g}  \hat{b}\hat{\epsilon}, \;\;
\hat{g} \mapsto  \hat{\epsilon}^{-1}\hat{g}  \hat{c}\hat{\epsilon}.  
\end{aligned}
 \end{equation}  
 
 Furthermore,   $\hat{\epsilon}\, \hat{h}(\hat{\epsilon})$ is homotopic to $\hat{g}$. 
\end{Prop}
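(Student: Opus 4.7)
The plan is to verify the seven formulas of \eqref{e:hatImages} individually by explicit geometric tracing, and then to derive the auxiliary identity $\hat\epsilon\cdot\hat h(\hat\epsilon)\simeq\hat g$ by direct inspection of the picture. Since Lemma~\ref{l:orbit2} gives $h(x_0)\ne x_0$, each image loop $h(\alpha)$ is based at $h(x_0)$ rather than $x_0$, and the $\hat\epsilon^{\pm 1}$ bracketing on the right-hand side of each formula is exactly the change-of-basepoint along $\hat\epsilon\colon\hat x_0\to\hat h(\hat x_0)$ needed to read the identity as an equality in $\pi_1(\hat X,\hat h(\hat x_0))$.

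First, for each $\alpha\in\{a,b,c,d,e,f,g\}$ of Lemma~\ref{l:genPiOneOnX}, I would compute the image $h(\alpha)$ as an explicit piecewise-linear subset of $\mathcal P$ by applying the piecewise-affine formula for $h$. Concretely, each $\alpha$ is a concatenation of straight segments lying in the rectangles $R_1,\dots,R_4$ of the definition, and $h$ sends each such segment to another straight segment by the map $(x,y)\mapsto(\alpha x,\alpha^{-1}y)$ followed by the region-dependent translation; the image is then reassembled across the edge identifications of $\mathcal P$. Figures~\ref{f:hubbRepn}, \ref{f:ayAsAffDiffeo}, and \ref{f:zippered} provide the visual guide for this tracing.

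Second, I would simplify each $h(\alpha)$ up to homotopy \emph{on the regular locus} $X\setminus\{x_1,x_2\}$ (so that the homotopy lifts to $\hat X$), rel basepoint $h(x_0)$, to a concatenation of the form $\epsilon^{-1}\cdot w\cdot\epsilon$ for a word $w$ in $\{a,b,c,d,e,f,g\}^{\pm 1}$. The idea is to identify sub-arcs of $h(\alpha)$ that, after a short side-trip along $\epsilon$ and back along $\epsilon^{-1}$ through $x_0$, close up into representatives of the named generators; cancellation of the intermediate $\epsilon\cdot\epsilon^{-1}$ pairs then yields the stated outer sandwich. Restricting to homotopies that avoid both singularities is essential, because identities on $X$ alone — which allow free homotopy relative to the puncture set — are strictly weaker than the $\pi_1(\hat X)$-identities we need; this restriction is always achievable since the singularities are isolated points in a surface.

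Third, for the final claim, Lemma~\ref{l:orbit2} gives $h^2(x_0)=x_0$, so $\hat\epsilon\cdot\hat h(\hat\epsilon)$ is genuinely a loop at $\hat x_0$. Tracing $\epsilon$ (the rightward-then-downward cyan path shown in Figure~\ref{f:ayAsAffDiffeo}) and then its image $h(\epsilon)$ (which exits $h(x_0)$ rightward and returns through the bottom edge of $\mathcal P$ to $x_0$), the resulting concatenated loop visibly coincides with the curve $g$ of Figure~\ref{f:zippered} via a basepoint-preserving homotopy on the regular locus, yielding $\hat\epsilon\cdot\hat h(\hat\epsilon)\simeq\hat g$ on $\hat X$.

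The main obstacle is the bookkeeping in Step~2: seven generators, each crossing several of the rectangles $R_i$, produce intricate image paths that must each be simplified individually, and one must consistently insert $\epsilon\cdot\epsilon^{-1}$ pairs at the right places to recognize the named generators. A useful sanity check throughout is that the abelianization of each formula in \eqref{e:hatImages} must match the action of $h_*$ on $H_1(X;\mathbb Z)$, which is directly computable from the piecewise-linear formula for $h$ and thus catches any error in the tracing.
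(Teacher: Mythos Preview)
Your proposal is correct and follows essentially the same approach as the paper: explicit geometric tracing of each $h(\alpha)$ using the piecewise-affine description of $h$, simplification via homotopies that avoid the two singularities (so they lift to $\hat X$), and direct inspection of $\epsilon\cdot h(\epsilon)$ against the curve $g$. The paper carries this out pictorially via Figures~\ref{f:hahb}--\ref{f:hehfhg} and Figures~\ref{f:ayAsAffDiffeo}, \ref{f:zippered}; your abelianization sanity check is a useful addition not present in the paper.
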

%-----------------------
\begin{proof}     One deduces from Figures~\ref{f:hahb} through \ref{f:hehfhg} that up to homotopy relative to the two singularities of $X$ and to $x_0$,    $h$ acts as $a \mapsto \epsilon^{-1} fc \epsilon, b\mapsto \epsilon^{-1} fdg^{-1} \epsilon, c\mapsto \epsilon^{-1} feg^{-1} \epsilon, d \mapsto \epsilon^{-1} f^2 c \epsilon, e\mapsto \epsilon^{-1} gac \epsilon, f \mapsto \epsilon^{-1} gb \epsilon, g \mapsto \epsilon^{-1} gc \epsilon$.     From Figures~\ref{f:ayAsAffDiffeo} and \ref{f:zippered}, one sees that $g$ and $ \epsilon h( \epsilon)$ define the same class in $\pi_1(X, x_0)$.    The result thus follows. 
\end{proof}

For later typographical ease,  let  
 $\mathcal B = \{A, B, C, D, E, F, G\}\subset H_1(\hat{X};\ZZ)$, where  $A =  [\hat{a}], B = [\hat{b}], \dots, G =[\hat{g}]$. 
  
%-----------------------
\begin{Lem}   The set $\mathcal B$   forms a 
$\mathbb Z$-basis of $H_1(\hat{X};\ZZ)$.   The $\mathbb Z$-linear map $\hat{h}_* - \text{Id}$ on $H_1(\hat{X};\ZZ)$, written additively, has as its matrix with respect to $\mathcal B$ the following: 
\[
\big(\, \hat{h}_*-\emph{Id}\,\big)_{\mathcal B} =
\begin{pmatrix}
-1& 0 & 0 & 0 & 1 & 0 & 0 \\
0 & -1& 0 & 0 & 0 & 1 & 0 \\
1 & 0 & -1& 1 & 1 & 0 & 1 \\
0 & 1 & 0 & -1& 0 & 0 & 0 \\
0 & 0 & 1 & 0 & -1& 0 & 0 \\
1 & 1 & 1 & 2 & 0 & -1& 0 \\
0 & -1&-1 & 0 & 1 & 1 &0
\end{pmatrix}.\]

\end{Lem}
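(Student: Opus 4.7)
The plan is to reduce the lemma to a direct abelianization of the data already assembled in Proposition~\ref{p:hatImages}.

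\medskip

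\textbf{Step 1: Basis statement.} Lemma~\ref{l:genOnXhat} tells us that $\pi_1(\hat{X}, \hat{x}_0)$ is \emph{freely} generated by the classes of $\hat{a},\hat{b},\hat{c},\hat{d},\hat{e},\hat{f},\hat{g}$. Abelianizing, $H_1(\hat{X};\mathbb{Z})$ is the free $\mathbb{Z}$-module on $\mathcal{B}=\{A,B,C,D,E,F,G\}$. In particular, this has rank $7$, consistent with a genus-$3$ surface with two boundary circles (whose first Betti number is $2g + d - 1 = 7$ as used in Lemma~\ref{l:generateFreely}). This gives the first assertion.

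\medskip

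\textbf{Step 2: Reading off $\hat{h}_*$.} For the matrix, the key observation is that in Proposition~\ref{p:hatImages} each image is conjugated by $\hat{\epsilon}$, and conjugation becomes trivial after abelianization. Hence one simply collects exponent sums from \eqref{e:hatImages}:
\begin{align*}
\hat{h}_*(A) &= F+C, & \hat{h}_*(B) &= F+D-G, & \hat{h}_*(C) &= F+E-G, \\
\hat{h}_*(D) &= 2F+C, & \hat{h}_*(E) &= G+A+C, & \hat{h}_*(F) &= G+B, \\
\hat{h}_*(G) &= G+C. & & & &
\end{align*}

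\medskip

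\textbf{Step 3: Subtract the identity and assemble.} Subtracting the input generator from each image yields the seven columns of the matrix: e.g.\ $(\hat{h}_*-\mathrm{Id})(A)=-A+C+F$ gives the first column; $(\hat{h}_*-\mathrm{Id})(G)=C$ gives the last (all zero except a $1$ in the $C$-row); and similarly for the intermediate five columns. Comparing with the displayed $7\times 7$ matrix, all entries match row-by-row in the ordering $(A,B,C,D,E,F,G)$.

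\medskip

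\textbf{Expected difficulty.} There is essentially no obstacle once Proposition~\ref{p:hatImages} is in hand; the only subtlety worth remarking on in the write-up is the vanishing of conjugation under abelianization, which makes the basepoint path $\hat{\epsilon}$ irrelevant to the matrix. The remainder is a bookkeeping exercise of copying exponents into a matrix and subtracting the identity, done entry-by-entry.
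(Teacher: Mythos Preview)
Your proof is correct and follows essentially the same route as the paper's own argument: invoke Lemma~\ref{l:genOnXhat} to get free generators, abelianize Proposition~\ref{p:hatImages}, and read off the matrix. The paper's proof is simply a terser version of what you wrote; your added remark that the $\hat{\epsilon}$-conjugation vanishes under abelianization is the only detail you make explicit that the paper leaves implicit.
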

%-----------------------

\begin{proof}  Since  the classes in $\pi_1(\hat{X}, \hat{x}_0)$ of $\hat{a}, \hat{b}, \hat{c}, \hat{d}, \hat{e}, \hat{f}, \hat{g}$ freely generate, their classes in homology form a basis.    From the previous proposition, it immediately follows that
  $\big(\, \hat{h}_*-\text{Id}\,\big)_{\mathcal B}$ is as displayed above. 
\end{proof} 
 
%-----------------------------------images of a and b ----------   
\begin{figure}[h]
\scalebox{1}{
%--------------------------------------------- 
\begin{tikzpicture}[x=5cm,y=5cm] 
%make outline
\draw   (0,0)--(0,1)--(\alp, 1)--(\alp, 0.839) --(0.839, 0.839)--(0.839, \alp)--(1, \alp) -- (1,0) --cycle; 
%mark blue singularity
 \foreach \x/\y in {0.191/0.352, 0.420/0.648, 0.771/0.191%
} { \node at (\x,\y) [blue]{$\bullet$}; } 
%mark red singularity
 \foreach \x/\y in {0.044/0, 0.339/0, 0.5/0, 0.272/1, 0.691/0.839, 0.920/0.544%
} { \node at (\x,\y) [red]{$\bullet$}; } 
% slits to blue singularities as thin lines 
\draw[thin] (0.191,0)--(0.191, 0.352);        
\draw[thin] (0.420, 0)--(0.420, 0.648); 
\draw[thin] (0.771, 0)--(0.771, 0.191); 
%
%h(a)
\draw[thick,->-=0.5][olive] (0.68,0.839)--(0.6,0.41);
\node at (0.69,0.6) {\tiny{$3$}};
\draw[thick,->-=0.5][olive] (0.6,0.41)--(0.6,0);
\node at (0.65,0.2) {\tiny{$1$}};
\draw[thick,->-=0.5][olive] (0.366,1)--(0.327,0);
\node at (0.3,0.5) {\tiny{$2$}};
%c 
\draw[thick,->-=0.5,dashed][red] (0.623,0.839)--(0.095,0.77);
\draw[thick,->-=0.5,dashed][red] (0.095,0.77) .. controls (0.191,0.5) ..   (0.2711,0);
\node at (0.1,0.6) {$c$};
%f
\draw[thick,->-=0.5,dashed][blue](0.33,1) to [bend left] (0.095,0.77);
\draw[thick,->-=0.72,dashed][blue](0.095,0.77) .. controls (0.691,0.7) and (0.55,0.65) .. (0.55,0);
\node at (0.2,0.9) {$f$};
%basepoint
\node at (0.0957439,0.771845) [black]{$\star$};
%image of Basepoint, in order 2 orbit 
\node at (0.595744,0.419643) [gray, fill opacity = 0.6]{$\star$};
%\epsilon
\draw[thin,->-=0.5, cyan] (0.095,0.77)--(0.595744, 0.77);
\draw[thin,->-=0.5, cyan] (0.595744, 0.77)--(0.595744, 0.44);
\node at (0.56,0.72) [cyan]{$\epsilon$};
 \end{tikzpicture}
\begin{tikzpicture}[x=5cm,y=5cm] 
%make outline
\draw   (0,0)--(0,1)--(\alp, 1)--(\alp, 0.839) --(0.839, 0.839)--(0.839, \alp)--(1, \alp) -- (1,0) --cycle; 
%mark blue singularity
 \foreach \x/\y in {0.191/0.352, 0.420/0.648, 0.771/0.191%
} { \node at (\x,\y) [blue]{$\bullet$}; } 
%mark red singularity
 \foreach \x/\y in {0.044/0, 0.339/0, 0.5/0, 0.272/1, 0.691/0.839, 0.920/0.544%
} { \node at (\x,\y) [red]{$\bullet$}; } 
% slits to blue singularities as thin lines 
\draw[thin] (0.191,0)--(0.191, 0.352);        
\draw[thin] (0.420, 0)--(0.420, 0.648); 
\draw[thin] (0.771, 0)--(0.771, 0.191); 
%h(b)
\draw[thin,->-=.3][blue,thick] (0.968,0.544) to [bend left] (0.5957,0.42);
\draw[thin,->-=.5][blue,thick] (0.5957,0.42)--(0.612,0);
\draw[thin,->-=.5][blue,thick] (0.383,1) -- (0.383,0);
%d  
\draw[thin,->-=0.5][purple](0.1819,1) to [bend right] (0.095,0.77);
\draw[thin,->-=0.5][purple] (0.095,0.77) .. controls (0.36,0.7) and (0.3712,0.65) .. (0.3712,0);
\draw[thin,->-=0.5][purple] (0.9515,0.544)--(0.95,0);
\node at (0.3,0.56) {$d$};
% -g
\draw[thin,-<-=0.5](0.042,1) to [bend right] (0.095,0.77);
\draw[thin,-<-=0.5] (0.095,0.77) .. controls (0.839,0.7) ..  (0.8142,0);
\node at (0.73,0.73) {$-g$};
%f
\draw[thick,->-=0.5,dashed][blue](0.33,1) to [bend left] (0.095,0.77);
\draw[thick,->-=0.72,dashed][blue](0.095,0.77) .. controls (0.691,0.7) and (0.55,0.65) .. (0.55,0);
\node at (0.2,0.9) {$f$};
%basepoint
\node at (0.0957439,0.771845) [black]{$\star$};
%image of Basepoint, in order 2 orbit 
\node at (0.595744,0.419643) [gray, fill opacity = 0.6]{$\star$};
%\epsilon
\draw[thin,->-=0.5, cyan] (0.095,0.77)--(0.595744, 0.77);
\draw[thin,->-=0.5, cyan] (0.595744, 0.77)--(0.595744, 0.44);
\node at (0.55,0.8) [cyan]{$\epsilon$};
% partial filling (retracts)
\draw[opacity=0, fill=gray, fill opacity=0.3] (0.08,0.97) -- (0.08,0.94) --(0.1,0.94)--(0.1,0.97)--cycle;
\end{tikzpicture}}
\caption{The images $h(a)$ shown in olive on the left --- with three portions enumerated ---  and $h(b)$ shown in blue on the right.   The class of $h(a)$ in $\pi_1(X, h(x_0))$ equals the class of  $\epsilon^{-1} fc \epsilon$, since we can homotope the curve formed by  $\epsilon^{-1}$ followed by the initial portion of $f$ to agree with  portion 1 of $h(a)$, the second portion of $f$ and the initial portion of $c$ to agree with portion 2 of $h(a)$ and finally the second portion of $c$ followed by $\epsilon$ to agree with portion 3 of $h(a)$.   Similarly, $h(b)$ is homotopic to $\epsilon^{-1} fdg^{-1} \epsilon$.}
\label{f:hahb}
\end{figure}
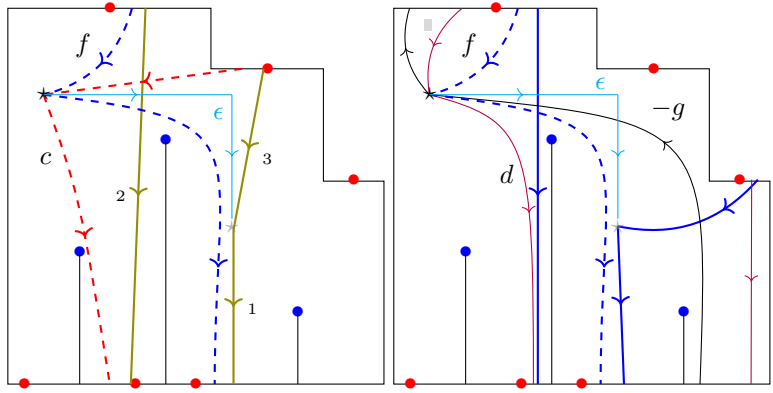
%------------------------------------- 

%-----------------------------------images of c and d ----------  
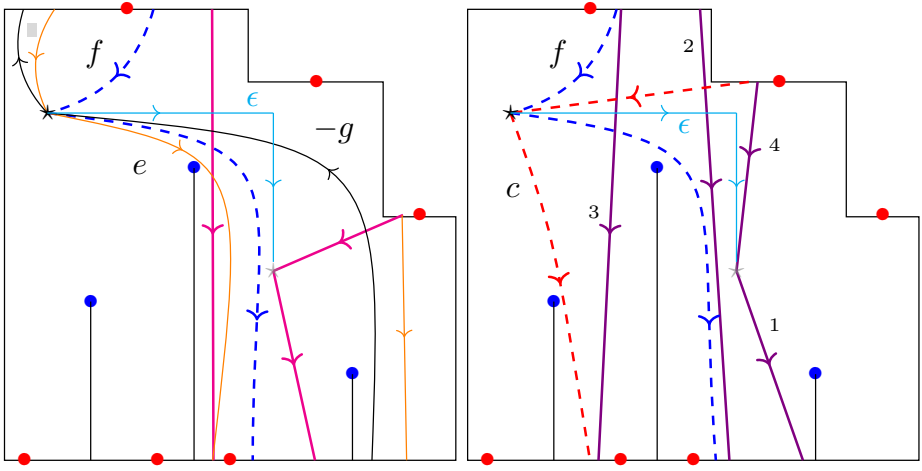
\begin{figure}[h]
\scalebox{1.2}{
%--------------------first h(c) -------------------------  
\begin{tikzpicture}[x=5cm,y=5cm] 
%make outline
\draw   (0,0)--(0,1)--(\alp, 1)--(\alp, 0.839) --(0.839, 0.839)--(0.839, \alp)--(1, \alp) -- (1,0) --cycle; 
%mark blue singularity
 \foreach \x/\y in {0.191/0.352, 0.420/0.648, 0.771/0.191%
} { \node at (\x,\y) [blue]{$\bullet$}; } 
%mark red singularity
 \foreach \x/\y in {0.044/0, 0.339/0, 0.5/0, 0.272/1, 0.691/0.839, 0.920/0.544%
} { \node at (\x,\y) [red]{$\bullet$}; } 
% slits to blue singularities as thin lines
\draw[thin] (0.191,0)--(0.191, 0.352);        
\draw[thin] (0.420, 0)--(0.420, 0.648); 
\draw[thin] (0.771, 0)--(0.771, 0.191); 
%h(c)
\draw[thick,->-=0.5][magenta] (0.883,0.544)--(0.595744,0.419643);
\draw[thick,->-=0.5][magenta] (0.595744,0.419643)--(0.688,0);
\draw[thick,->-=0.5][magenta] (0.46,1)--(0.463,0);
%f
\draw[thick,->-=0.5,dashed][blue](0.33,1) to [bend left] (0.095,0.77);
\draw[thick,->-=0.72,dashed][blue](0.095,0.77) .. controls (0.691,0.7) and (0.55,0.65) .. (0.55,0);
\node at (0.2,0.9) {$f$};
%e
\draw[thin,->-=0.5][orange] (0.111,1) to [bend right] (0.095,0.77);
\draw[thin,->-=0.3][orange] (0.095,0.77) .. controls (0.54,0.68) .. (0.46122,0);
\draw[thin,->-=0.5][orange] (0.881,0.544)--(0.891,0);
\node at (0.3,0.65) {$e$};
% -g
\draw[thin,-<-=0.5](0.042,1) to [bend right] (0.095,0.77);
\draw[thin,-<-=0.5] (0.095,0.77) .. controls (0.839,0.7) ..  (0.8142,0);
\node at (0.73,0.73) {$-g$};
%basepoint
\node at (0.0957439,0.771845) [black]{$\star$};
%image of Basepoint, in order 2 orbit 
\node at (0.595744,0.419643) [gray, fill opacity = 0.6]{$\star$};
%\epsilon
\draw[thin,->-=0.5, cyan] (0.095,0.77)--(0.595744, 0.77);
\draw[thin,->-=0.5, cyan] (0.595744, 0.77)--(0.595744, 0.44);
\node at (0.55,0.8) [cyan]{$\epsilon$};
% partial filling (retracts)
\draw[opacity=0, fill=gray, fill opacity=0.3] (0.05,0.97) -- (0.05,0.94) --(0.07,0.94)--(0.07,0.97)--cycle;
\end{tikzpicture}
%
%  and, now, h(d) 
%
\begin{tikzpicture}[x=5cm,y=5cm] 
%make outline
\draw   (0,0)--(0,1)--(\alp, 1)--(\alp, 0.839) --(0.839, 0.839)--(0.839, \alp)--(1, \alp) -- (1,0) --cycle; 
%mark blue singularity
 \foreach \x/\y in {0.191/0.352, 0.420/0.648, 0.771/0.191%
} { \node at (\x,\y) [blue]{$\bullet$}; } 
%mark red singularity
 \foreach \x/\y in {0.044/0, 0.339/0, 0.5/0, 0.272/1, 0.691/0.839, 0.920/0.544%
} { \node at (\x,\y) [red]{$\bullet$}; } 
% slits to blue singularities as thin lines 
\draw[thin] (0.191,0)--(0.191, 0.352);        
\draw[thin] (0.420, 0)--(0.420, 0.648); 
\draw[thin] (0.771, 0)--(0.771, 0.191); 
%h(d)
\draw[thick,->-=0.4][violet] (0.6425,0.839)--(0.595744,0.419643);
\node at (0.68,0.7) {\tiny{$4$}};
\draw[thick,->-=0.5][violet] (0.595744,0.419643)--(0.743,0);
\node at (0.68,0.3) {\tiny{$1$}};
\draw[thick,->-=0.4][violet] (0.515,1)--(0.58,0);
\node at (0.49,0.92) {\tiny{$2$}};
\draw[thick,->-=0.5][violet] (0.34,1)--(0.29,0);
\node at (0.28,0.55) {\tiny{$3$}};
%c 
\draw[thick,->-=0.5,dashed][red] (0.623,0.839)--(0.095,0.77);
\draw[thick,->-=0.5,dashed][red] (0.095,0.77) .. controls (0.191,0.5) ..   (0.2711,0);
\node at (0.1,0.6) {$c$};
%f
\draw[thick,->-=0.5,dashed][blue](0.33,1) to [bend left] (0.095,0.77);
\draw[thick,->-=0.72,dashed][blue](0.095,0.77) .. controls (0.63,0.7) and (0.5,0.65) .. (0.55,0);
\node at (0.2,0.9) {$f$};
%basepoint
\node at (0.0957439,0.771845) [black]{$\star$};
%image of Basepoint, in order 2 orbit 
\node at (0.595744,0.419643) [gray, fill opacity = 0.6]{$\star$};
%\epsilon
\draw[thin,->-=0.7, cyan] (0.095,0.77)--(0.595744, 0.77);
\draw[thin,->-=0.5, cyan] (0.595744, 0.77)--(0.595744, 0.44);
\node at (0.48,0.74) [cyan]{$\epsilon$};
\end{tikzpicture}}
\caption{The images $h(c)$ shown in red on the left,  and $h(d)$ shown in violet on the right.   The curve $\epsilon^{-1} feg^{-1} \epsilon$ homotopes to $h(c)$.     The curve $\epsilon^{-1} f^2 c \epsilon$ homotopes to $h(d)$.  For this latter, homotope the curve formed by  $\epsilon^{-1}$ followed by the initial portion of $f$ to agree with  portion 1 of $h(d)$; the second portion of $f$ and the initial portion of the second copy of $f$ to agree with portion 2 of $h(d)$; the second portion of $f$ and the initial portion of $c$ to agree with portion 3 of $h(d)$;  and the second portion of $c$ followed by $\epsilon$ to agree with portion 4 of $h(d)$.  
}
\label{f:hchd}
\end{figure}
%-------------------------------------------  

%---------------images of e,f,g ---------------------------- 
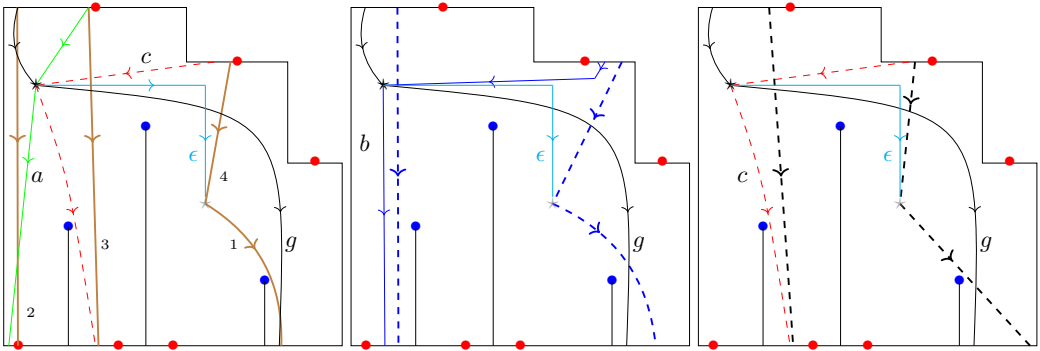
\begin{figure}[h]
\scalebox{.9}{
%-----------------------first h(e) ---------------------- 
\begin{tikzpicture}[x=5cm,y=5cm] 
%make outline
\draw   (0,0)--(0,1)--(\alp, 1)--(\alp, 0.839) --(0.839, 0.839)--(0.839, \alp)--(1, \alp) -- (1,0) --cycle; 
%mark blue singularity
 \foreach \x/\y in {0.191/0.352, 0.420/0.648, 0.771/0.191%
} { \node at (\x,\y) [blue]{$\bullet$}; } 
%mark red singularity
 \foreach \x/\y in {0.044/0, 0.339/0, 0.5/0, 0.272/1, 0.691/0.839, 0.920/0.544%
} { \node at (\x,\y) [red]{$\bullet$}; } 
% slits to blue singularities as thin lines
\draw[thin] (0.191,0)--(0.191, 0.352);        
\draw[thin] (0.420, 0)--(0.420, 0.648); 
\draw[thin] (0.771, 0)--(0.771, 0.191); 
%h(e)
\draw[thick,->-=0.5][brown] (0.67,0.839)--(0.595744,0.419643);
\node at (0.65,0.5) {\tiny{$4$}};
\draw[thick,->-=0.4][brown] (0.595744,0.419643)  to [bend left] (0.82,0);
\node at (0.68,0.3) {\tiny{$1$}};
\draw[thick,->-=0.4][brown] (0.251,1)--(0.28,0);
\node at (0.3,0.3) {\tiny{$3$}};
\draw[thick,->-=0.4][brown] (0.04,1)--(0.042,0);
\node at (0.08,0.1) {\tiny{$2$}};
% g
\draw[thin,->-=0.5](0.042,1) to [bend right] (0.095,0.77);
\draw[thin,->-=0.7] (0.095,0.77) .. controls (0.839,0.7) ..  (0.8142,0);
\node at (0.85,0.3) {$g$};
%a
\draw[thin,->-=0.5][green](0.2504,1) to (0.095,0.77);
\draw[thin,->-=0.3][green] (0.095,0.77) to  (0.015,0);
\node at (0.1,0.5) {$a$};
%c 
\draw[thin,->-=0.5,dashed][red] (0.623,0.839)--(0.095,0.77);
\draw[thin,->-=0.5,dashed][red] (0.095,0.77) .. controls (0.191,0.5) ..   (0.2711,0);
\node at (0.42,0.85) {$c$};
%basepoint
\node at (0.0957439,0.771845) [black]{$\star$};
%image of Basepoint, in order 2 orbit 
\node at (0.595744,0.419643) [gray, fill opacity = 0.6]{$\star$};
%\epsilon
\draw[thin,->-=0.7, cyan] (0.095,0.77)--(0.595744, 0.77);
\draw[thin,->-=0.5, cyan] (0.595744, 0.77)--(0.595744, 0.44);
\node at (0.56,0.56) [cyan]{$\epsilon$};
\end{tikzpicture}
%
% and, now h(f)
%
\begin{tikzpicture}[x=5cm,y=5cm] 
%make outline
\draw   (0,0)--(0,1)--(\alp, 1)--(\alp, 0.839) --(0.839, 0.839)--(0.839, \alp)--(1, \alp) -- (1,0) --cycle; 
%mark blue singularity
 \foreach \x/\y in {0.191/0.352, 0.420/0.648, 0.771/0.191%
} { \node at (\x,\y) [blue]{$\bullet$}; } 
%mark red singularity
 \foreach \x/\y in {0.044/0, 0.339/0, 0.5/0, 0.272/1, 0.691/0.839, 0.920/0.544%
} { \node at (\x,\y) [red]{$\bullet$}; } 
% slits to blue singularities as thin lines 
\draw[thin] (0.191,0)--(0.191, 0.352);        
\draw[thin] (0.420, 0)--(0.420, 0.648); 
\draw[thin] (0.771, 0)--(0.771, 0.191);
%h(f)
\draw[thick,->-=0.4,dashed][blue](0.8,0.839)--(0.595744,0.419643);
\draw[thick,->-=0.3,dashed][blue](0.595744,0.419643) to [bend left] (0.9,0);
\draw[thick,->-=0.5,dashed][blue](0.137,1)--(0.14,0);
% g
\draw[thin,->-=0.5](0.042,1) to [bend right] (0.095,0.77);
\draw[thin,->-=0.7] (0.095,0.77) .. controls (0.839,0.7) ..  (0.8142,0);
\node at (0.85,0.3) {$g$};
% b
\draw[thin,->-=0.5][blue] (0.75,0.839)--(0.72,0.79019);
\draw[thin,->-=0.5][blue] (0.72,0.79019)--(0.095,0.77019);
\draw[thin,->-=0.5][blue] (0.095,0.77019)--(0.1,0);
\node at (0.04,0.6) {$b$};
%basepoint
\node at (0.0957439,0.771845) [black]{$\star$};
%image of Basepoint, in order 2 orbit 
\node at (0.595744,0.419643) [gray, fill opacity = 0.6]{$\star$};
%\epsilon
\draw[thin, cyan] (0.095,0.77)--(0.595744, 0.77);
\draw[thin,->-=0.5, cyan] (0.595744, 0.77)--(0.595744, 0.44);
\node at (0.56,0.56) [cyan]{$\epsilon$};
\end{tikzpicture}
%
% and finally h(g)
%
\begin{tikzpicture}[x=5cm,y=5cm] 
%make outline
\draw   (0,0)--(0,1)--(\alp, 1)--(\alp, 0.839) --(0.839, 0.839)--(0.839, \alp)--(1, \alp) -- (1,0) --cycle; 
%mark blue singularity
 \foreach \x/\y in {0.191/0.352, 0.420/0.648, 0.771/0.191%
} { \node at (\x,\y) [blue]{$\bullet$}; } 
%mark red singularity
 \foreach \x/\y in {0.044/0, 0.339/0, 0.5/0, 0.272/1, 0.691/0.839, 0.920/0.544%
} { \node at (\x,\y) [red]{$\bullet$}; } 
% slits to blue singularities as thin lines 
\draw[thin] (0.191,0)--(0.191, 0.352);        
\draw[thin] (0.420, 0)--(0.420, 0.648); 
\draw[thin] (0.771, 0)--(0.771, 0.191);
%h(g)
\draw[thick,->-=0.3,dashed](0.64,0.839)--(0.595744,0.419643);
\draw[thick,->-=0.4,dashed](0.595744,0.419643)--(0.98,0);
\draw[thick,->-=0.5,dashed] (0.209,1)--(0.28,0);
% g
\draw[thin,->-=0.5](0.042,1) to [bend right] (0.095,0.77);
\draw[thin,->-=0.7] (0.095,0.77) .. controls (0.839,0.7) ..  (0.8142,0);
\node at (0.85,0.3) {$g$};
%c 
\draw[thin,->-=0.5,dashed][red] (0.623,0.839)--(0.095,0.77);
\draw[thin,->-=0.5,dashed][red] (0.095,0.77) .. controls (0.191,0.5) ..   (0.2711,0);
\node at (0.13,0.5) {$c$};
%basepoint
\node at (0.0957439,0.771845) [black]{$\star$};
%image of Basepoint, in order 2 orbit 
\node at (0.595744,0.419643) [gray, fill opacity = 0.6]{$\star$};
%\epsilon
\draw[thin, cyan] (0.095,0.77)--(0.595744, 0.77);
\draw[thin,->-=0.5, cyan] (0.595744, 0.77)--(0.595744, 0.44);
\node at (0.56,0.56) [cyan]{$\epsilon$};
\end{tikzpicture}}
\caption{Left to right: the curves $\hat{h}_*(e)$ in brown, $\hat{h}_*(f)$ in dashed blue, and $\hat{h}_*(g)$ in dashed black.  One finds that $ \epsilon^{-1} gac \epsilon$ homotopes to $h(e)$;   $\epsilon^{-1} gb \epsilon, g \mapsto \epsilon^{-1} gc \epsilon$ to $h(f)$; and $\epsilon^{-1} gc \epsilon$ to $h(g)$.}
\label{f:hehfhg}
\end{figure}
%-----------------------

Calculating the Smith Normal Form of $\big(\, \hat{h}_*-\text{Id}\,\big)_{\mathcal B}$ allows one to find the following.
%-----------------------
\begin{Lem}\label{l:cokerIdentified}   The cokernel of $\hat{h}_* - \text{Id} :  H_1(\hat{X};\ZZ) \to H_1(\hat{X};\ZZ)$ is such that
\[ \emph{Coker}(\hat{h}_* - \emph{Id})  \cong \ZZ/2\ZZ \oplus \ZZ.\]
Furthermore,   let
$\mathcal B' =  \{-A+ C +F, -B+D+F-G,-C+E+F-G, C-D+2 F, A+C-E+G,F, G\}$; 
this is  an ordered $\mathbb Z$-basis of $H_1(\hat{X};\ZZ)$ such that the congruence class modulo $\emph{im}(\hat{h}_* - \text{Id})$ of the final element $G= [\hat{g}]$  generates the free subgroup of the cokernel and the previous element, $F$ similarly gives a generator of the torsion subgroup.  Each of the remaining elements of $\mathcal B'$ give the zero element of the cokernel. 
\end{Lem}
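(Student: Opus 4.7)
The plan is to compute the Smith Normal Form (SNF) of the $7 \times 7$ integer matrix $M = \big(\hat{h}_* - \text{Id}\big)_{\mathcal{B}}$ displayed in the previous lemma, which simultaneously determines the cokernel and, via the row operations performed, yields an explicit basis realizing the decomposition.

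First I would establish the invariant factor structure $(d_1, \ldots, d_7) = (1,1,1,1,1,2,0)$, which by the structure theorem immediately gives $\text{Coker}(\hat{h}_* - \text{Id}) \cong \mathbb{Z}/2\mathbb{Z} \oplus \mathbb{Z}$. Using the formula $d_k = \delta_k / \delta_{k-1}$, where $\delta_k$ denotes the gcd of all $k \times k$ minors, this reduces to: exhibiting a single $5 \times 5$ minor equal to $\pm 1$ (so $\delta_5 = 1$); checking that every $6 \times 6$ minor is even and at least one equals $\pm 2$ (so $\delta_6 = 2$); and producing a nonzero integer vector in $\ker M$ (so $\det M = 0$ and $d_7 = 0$). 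To extract $\mathcal{B}'$ itself, I would then perform row and column operations bringing $M$ to the diagonal form $\emph{diag}(1,1,1,1,1,2,0)$; the row operations assemble into a unimodular matrix $P$, and the columns of $P^{-1}$ (expressed in coordinates of $\mathcal{B}$) give the new basis $\mathcal{B}'$, with the $k$-th column corresponding to invariant factor $d_k$. In particular, the last two columns of $P^{-1}$ produce the generators of the free and torsion parts of the cokernel, respectively.

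For the cleanest presentation, I would in fact verify the stated $\mathcal{B}'$ \emph{a posteriori} rather than derive it: (i) check that the change-of-basis matrix from $\mathcal{B}$ to $\mathcal{B}'$ has determinant $\pm 1$, so $\mathcal{B}'$ is a $\mathbb{Z}$-basis; (ii) express each of the first five vectors of $\mathcal{B}'$ as an explicit integer combination of the columns of $M$, so they vanish in the cokernel; (iii) exhibit an integer combination of columns of $M$ equal to $2F$, and verify by a mod-$2$ congruence on the displayed $M$ that $F$ itself is not in the $\mathbb{Z}$-span of its columns; (iv) conclude from the structural identification $\text{Coker} \cong \mathbb{Z}/2\mathbb{Z} \oplus \mathbb{Z}$, together with (ii) and (iii), that the class of $G$ generates the remaining $\mathbb{Z}$-summand. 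The main obstacle is purely the bookkeeping of integer linear algebra, in particular the careful selection of minors certifying $\delta_5 = 1$ and $\delta_6 = 2$ and getting the signs right in the columns of $P^{-1}$; no conceptual difficulty appears beyond standard Smith normal form manipulations.
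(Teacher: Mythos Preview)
Your proposal is correct and follows essentially the same approach as the paper, which simply asserts that the result follows from computing the Smith Normal Form of $\big(\hat{h}_* - \text{Id}\big)_{\mathcal{B}}$ without further detail. One small simplification: the first five elements of the stated $\mathcal{B}'$ are exactly the first five columns of $M$, so step (ii) of your a posteriori verification is immediate.
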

%-----------------------

\section{Multivariable Lefschetz function of the mapping torus $M_{\hat{h}}$} 

  Relying on Subsection~\ref{ss:toFriedsTheorem},  the main step of the current section is to compute the action of an elevation $h_Q$ of (a homotopically equivalent map to) $\hat{h}$ to a cover whose deck transformation group is $Q = Q_{\hat{h}}$ on cells of an appropriate CW-decomposition.  In Subsection~\ref{ss:homCovElevation} we determine how an elevation to the universal abelian cover of $\hat{X}$ acts on appropriate lifts of paths,  then use this in Subsection~\ref{ss:QcoverElevation} to determine how $h_Q$ similarly acts.   The action on the cells of the CW-decomposition follows, as shown in Subsection~\ref{ss:actOnCells}.  Using Fried's Theorem, Theorem~\ref{t:frieds}, the multivariable Lefschetz function is then given in Subsection~\ref{theFun}.   
The main result is implied by the results of the ensuing Subsection~\ref{ss:notHere}.  The section ends with further results and discussion.

\subsection{Action on homology cover of $\hat{X}$}\label{ss:homCovElevation}
Let $p_{\mathcal A}:\hat{X}_{\mathcal A}\to \hat{X}$ of $\hat{X}$ be the  universal abelian cover of $\hat{X}$;  this is the regular covering space of $\hat{X}$ whose deck transformation group is isomorphic to  $H_1(\hat{X};\ZZ)$.  
      Choose a basepoint $x_{\mathcal A}\in \hat{X}_{\mathcal A}$ above the basepoint $\hat{x}_0$ of $\hat{X}$.    
  Let 
$\tilde{a}$  denote the lift of $\hat{a}$ going from   $x_{\mathcal A}$ to $A  x_{\mathcal A}$, and similarly for $\tilde{b}, \dots, \tilde{g}$,  where here and below, we use juxtaposition to represent the action of the deck transformation group $H_1(\hat{X};\ZZ)$ on $\hat{X}_{\mathcal A}$.    Thus, for $J\in H_1(\hat{X},\ZZ)$ and $k\in \{a,b,c,d,e,f,g\}$,   we have that $J\tilde{k}$ is the lift of $\hat{k}$ with initial point $J x_{\mathcal A}$.     By definition, $\tilde{k}^{-1}$ is the same set of points as $\tilde{k}$, but the curve is taken in the opposite direction: it goes from $K  x_{\mathcal A}$ to $x_{\mathcal A}$.  Due to this, the lift of $\hat{k}^{-1}$ with initial point $J x_{\mathcal A}$ is in fact $J K^{-1} \tilde{k}^{-1}$, whose terminal endpoint is $J K^{-1}x_{\mathcal A}$.

Since $(\hat{h}\circ p_{\mathcal A})_*(\pi_1(\hat{X}_{\mathcal A})\,) \subset (p_{\mathcal A})_*(\pi_1(\hat{X}_{\mathcal A})\,)$, there is  an elevation $\tilde{h}:\hat{X}_{\mathcal A}\to \hat{X}_{\mathcal A}$  of $\hat{h}$.   Let $\tilde{\epsilon}$ be the path passing from $x_{\mathcal A}$ to $\tilde{h}(x_{\mathcal A})$ that lifts $\hat{\epsilon}$.   
Recall  that we take the multiplicative version of our homology groups.   We ``factor" curves from left to right, and use single dots to separate notation for curves.    Our lifts must be of course be connected curves, and thus for example a lift of a composite curve  $\hat{k}\hat{\ell}$ beginning at   $\hat{x}_0$ has the form $\tilde{k}\cdot K\tilde{\ell}$.

Thus,  \eqref{e:hatImages}  yields (using at times the fact that our group is abelian) the following. 

%-----------------------
\begin{Lem}\label{l:tildeIms}   The map $\tilde{h}$ acts as 
\begin{equation}\label{e:tildeImages}
\begin{aligned}
\tilde{a} &\mapsto \tilde{\epsilon}^{-1} \cdot \tilde{f} \cdot F \tilde{c} \cdot FC \tilde{\epsilon},\\
\tilde{b} &\mapsto  \tilde{\epsilon}^{-1} \cdot \tilde{f} \cdot F \tilde{d} \cdot FDG^{-1} \tilde{g}^{-1}\cdot FDG^{-1}\tilde{\epsilon},\\
\tilde{c} &\mapsto  \tilde{\epsilon}^{-1} \cdot \tilde{f} \cdot F \tilde{e} \cdot FEG^{-1} \tilde{g}^{-1}\cdot FEG^{-1}\tilde{\epsilon},\\
\tilde{d} &\mapsto  \tilde{\epsilon}^{-1} \cdot \tilde{f} \cdot F \tilde{f} \cdot F^2 \tilde{c} \cdot F^2C \tilde{\epsilon},\\
\tilde{e} &\mapsto  \tilde{\epsilon}^{-1} \cdot \tilde{g} \cdot{G} \tilde{a} \cdot GA \tilde{c} \cdot GAC \tilde{\epsilon},\\
\tilde{f} &\mapsto  \tilde{\epsilon}^{-1} \cdot \tilde{g} \cdot{G} \tilde{b} \cdot GB \tilde{\epsilon},\\
\tilde{g}  &\mapsto  \tilde{\epsilon}^{-1} \cdot \tilde{g} \cdot{G} \tilde{c} \cdot GC \tilde{\epsilon},\\
\tilde{\epsilon} &\mapsto  \tilde{\epsilon}^{-1} \cdot \tilde{g}.
\end{aligned}
\end{equation}
\end{Lem}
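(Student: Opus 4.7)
The plan is to lift the path-level identities of Proposition~\ref{p:hatImages} to $\hat{X}_{\mathcal A}$ using unique path lifting, together with the characterization of the elevation $\tilde{h}$ by $\tilde{h}(x_{\mathcal A}) = \tilde{\epsilon}(1)$. There is no mathematical depth here; the only work is the bookkeeping of the deck translates that appear as one traverses a lifted concatenation.

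For each generator $\hat{k}\in\{\hat{a},\ldots,\hat{g}\}$, Proposition~\ref{p:hatImages} writes $\hat{h}(\hat{k})$, viewed as a loop at $\hat{h}(\hat{x}_0)$, in the form $\hat{\epsilon}^{-1}\cdot w(\hat{k}) \cdot \hat{\epsilon}$. I would lift this homotopy of paths starting from $\tilde{h}(x_{\mathcal A})$; since $\tilde{h}(\tilde{k})$ is the lift of $\hat{h}(\hat{k})$ from that point, path-lifting uniqueness identifies it with the lift of the right-hand side. To compute that lift, I read the word left to right and apply the elementary rule: if the cumulative deck translate so far is $J\in H_1(\hat{X};\mathbb{Z})$, then a factor $\hat{m}$ lifts to $J\tilde{m}$ and updates $J$ to $J[\hat{m}]$, while a factor $\hat{m}^{-1}$ lifts to $J[\hat{m}]^{-1}\tilde{m}^{-1}$ and updates $J$ to $J[\hat{m}]^{-1}$. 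Starting with $J=1$ after $\tilde{\epsilon}^{-1}$, this mechanically reproduces each right-hand side of \eqref{e:tildeImages}; for example in the $\hat{b}$-row one computes $J=1,\ F,\ FD,\ FDG^{-1}$ successively, which exactly accounts for the factors $\tilde{\epsilon}^{-1},\tilde{f}, F\tilde{d}, FDG^{-1}\tilde{g}^{-1}, FDG^{-1}\tilde{\epsilon}$. The final cumulative translate must equal $\hat{h}_\ast([\hat{k}])$, which can be cross-checked against the $k$-th row of the matrix $(\hat{h}_\ast-\mathrm{Id})_{\mathcal B}$ of the previous lemma and gives a useful consistency check.

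The eighth identity follows from the ``furthermore'' clause of Proposition~\ref{p:hatImages}: the stated free homotopy of $\hat{\epsilon}\cdot\hat{h}(\hat{\epsilon})$ to $\hat{g}$ rearranges to $\hat{h}(\hat{\epsilon})\simeq \hat{\epsilon}^{-1}\cdot\hat{g}$ as a path from $\hat{h}(\hat{x}_0)$ to $\hat{h}^2(\hat{x}_0)$; lifting from $\tilde{h}(x_{\mathcal A})$ yields $\tilde{\epsilon}^{-1}$ (ending at $x_{\mathcal A}$), then $\tilde{g}$, with trivial cumulative deck translate. The main obstacle is thus not mathematical but organizational, namely running the bookkeeping uniformly across all eight formulas without transcription errors.
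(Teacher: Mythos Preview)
Your proposal is correct and is exactly the approach the paper takes: the paper's entire proof is the one-line remark that \eqref{e:hatImages} ``yields'' \eqref{e:tildeImages}, and what you have written is simply the detailed bookkeeping behind that claim. One minor point: for the $\tilde{\epsilon}$ line you should say ``homotopy rel endpoints'' rather than ``free homotopy'', since unique path lifting requires fixed endpoints (and indeed the paper establishes equality in $\pi_1(X,x_0)$).
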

%-------------------

\subsection{Elevated action  commuting with deck group  on maximal free abelian cover of $\hat{X}$}\label{ss:QcoverElevation}

In light of Lemma~\ref{l:cokerIdentified}, we let 
   $Q$ be the torsion free quotient of  $\text{Coker}(\hat{h}_* - \text{Id})$  and $\psi_Q: H_1(\hat{X}; \ZZ)\to Q$ be the natural quotient homomorphism. From that lemma, $Q=<\psi_Q(G)>$; let us denote this cyclic generator by $\nu$. 
It is fairly straightforward to show that $Q$ is the largest abelian torsion-free quotient of $\pi_1(\hat{X},x_0)$ 
on which $\hat{h}_{*}$ acts trivially. Let  $p_Q:\hat{X}_Q\to \hat{X}$ be the regular covering space  whose deck transformation group is isomorphic to $Q$, and let $x_Q$ be its basepoint.   There is thus a  covering transformation $r:\hat{X}_{\mathcal A}\to X_Q$ such that $p_{\mathcal A}=p_Q\circ r$.   Define $a_Q = r(\tilde{a}),b_Q = r(\tilde{b}), \dots, g_Q = r(\tilde{g})$ and $\epsilon_Q = r(\tilde{\epsilon})$.    Note that for $K\in H_1(\hat{X};\ZZ)$ and $\kappa$ a path in $\hat{X}_{\mathcal A}$ beginning at  $x_{\mathcal A}$, we have   $\psi_Q(K)r(\kappa) = r(K\kappa)$.\\
 
We can and do choose our elevation $h_Q$ of $\hat{h}$ such that $\tilde{h}$ is also an elevation of it.
With this, $r\circ \tilde{h}=h_Q\circ r$.    We now determine the action of $h_Q$ by using $\psi_Q, r$ and the action of $\tilde{h}$.
 
%-----------------------
\begin{Lem}\label{l:qIms}   The  map $h_{Q}$ acts as 

\begin{equation}\label{e:qImages}
 \begin{aligned}
a_Q&\mapsto \epsilon_{Q}^{-1}\cdot f_Q \cdot  c_Q \cdot \epsilon_Q,\\
b_Q&\mapsto \epsilon_{Q}^{-1}\cdot f_Q\cdot  d_Q  \cdot   \nu^{-1} g_{Q}^{-1} \cdot  \nu^{-1}  \epsilon_Q, \\
c_Q&\mapsto \epsilon_{Q}^{-1}\cdot f_Q\cdot e_Q \cdot  g_{Q}^{-1} \cdot  \epsilon_Q,\\
d_Q&\mapsto \epsilon_{Q}^{-1}\cdot f_Q\cdot f_Q \cdot c_Q \cdot \epsilon_Q,\\
e_Q&\mapsto \epsilon_{Q}^{-1}\cdot g_Q \cdot \nu a_Q \cdot \nu c_Q\cdot \nu \epsilon_Q,\\
f_Q&\mapsto \epsilon_{Q}^{-1}\cdot g_Q \cdot \nu  b_Q \cdot \epsilon_Q,\\
g_Q&\mapsto \epsilon_{Q}^{-1}\cdot g_Q \cdot  \nu c_Q \cdot \nu \epsilon_Q,\\ 
\epsilon_Q &\mapsto \epsilon_{Q}^{-1}\cdot g_Q\,.
\end{aligned}
\end{equation}

\end{Lem}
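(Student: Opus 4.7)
The plan is to push the $\tilde{h}$-action formulas of Lemma~\ref{l:tildeIms} down along the intermediate covering $r:\hat{X}_{\mathcal A}\to \hat{X}_Q$. Since $r\circ \tilde{h}=h_Q\circ r$ and $r(K\kappa)=\psi_Q(K)\,r(\kappa)$ for every $K\in H_1(\hat{X};\ZZ)$ and every path $\kappa$ based at $x_{\mathcal A}$, the formulas in \eqref{e:qImages} should arise by replacing each deck coefficient on the right-hand side of \eqref{e:tildeImages} by its image under $\psi_Q$, and by identifying each $r(\tilde{k})$ with $k_Q$.

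First I would read off $\psi_Q$ on the standard basis $\mathcal B=\{A,B,C,D,E,F,G\}$ using Lemma~\ref{l:cokerIdentified}. The first five elements of $\mathcal B'$ represent the zero class in $\text{Coker}(\hat{h}_*-\text{Id})$, $F$ represents the $2$-torsion class (hence dies in $Q$), and $G$ maps to the generator $\nu$. Writing these relations additively in $Q$ and solving the resulting linear system yields
\[\psi_Q(A)=\psi_Q(C)=\psi_Q(D)=\psi_Q(F)=0,\qquad \psi_Q(E)=\psi_Q(G)=\nu,\qquad \psi_Q(B)=-\nu.\]

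Then, for each generator, I would apply $r$ term-by-term to the corresponding line of \eqref{e:tildeImages} and substitute these values. For example, the coefficients $F$, $FC$, $FEG^{-1}$ and $F^2$, $F^2C$ all have trivial $\psi_Q$-image, so the formulas for $a_Q$, $c_Q$ and $d_Q$ collapse to concatenations with no $\nu$ factors, whereas $FDG^{-1}$ contributes $\nu^{-1}$ in the formula for $b_Q$, and $G$, $GA$, $GAC$, $GC$ contribute the $\nu$-factors visible in the formulas for $e_Q$, $f_Q$ and $g_Q$. The last line, $\tilde{\epsilon}\mapsto \tilde{\epsilon}^{-1}\cdot\tilde{g}$, involves no coefficients and so descends verbatim to $\epsilon_Q\mapsto \epsilon_Q^{-1}\cdot g_Q$.

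There is no real obstacle here beyond bookkeeping. The only subtlety is to keep multiplicative versus additive conventions straight, so that, for instance, $\psi_Q(G^{-1})=\nu^{-1}$ and $\psi_Q(GB)=0$, and to remember that each coefficient appearing in \eqref{e:tildeImages} is the $H_1$-class of the cumulative prefix of the word; this guarantees the substitutions are internally consistent and that the resulting concatenation is a legitimate path in $\hat{X}_Q$. Carrying this out for each of the eight maps produces exactly \eqref{e:qImages}.
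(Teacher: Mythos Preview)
Your approach is essentially identical to the paper's: both push \eqref{e:tildeImages} down via $r$ by replacing each deck coefficient with its $\psi_Q$-image, after first determining $\psi_Q$ on the basis $\mathcal B$ from the relations encoded in $\mathcal B'$. One remark: your value $\psi_Q(B)=-\nu$ is the correct one (and is what makes $\psi_Q(GB)=0$ in the $f_Q$-line of \eqref{e:qImages}); the paper's proof records $\psi_Q(B)=\nu$, but this stems from a sign slip in transcribing the second element of $\mathcal B'$ to multiplicative form, and the stated formulas \eqref{e:qImages} are consistent with your value.
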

%---------------

\begin{proof}
Recall that $\psi_Q(G) = \nu$.    Consider  the ordered $\mathbb Z$-basis  of the  deck transformation group  $H_1(\hat{X};\ZZ)$ from Lemma~\ref{l:cokerIdentified}, written multiplicatively:
\[\mathcal B' =  (A^{-1}CF, B^{-1}DFG, C^{-1}EFG^{-1}, CD^{-1}F^2, ACE^{-1}G, F, G).\]   By that lemma,  the first six entries of $\mathcal B'$ generate the kernel of $\psi_Q$.  In particular, $F$ is in the kernel. Multiplying the third and fifth of these entries, we find $A \in \ker \psi_Q$.   The first entry shows that  $C \in \ker \psi_Q$.  The fourth,  that $D$ is also.  The second entry shows that  $\psi_Q(B) = \nu$,  and the fifth that $\psi_Q(E) = \nu$.  One can now evaluate the homomorphism $\psi_Q$  on the elements that appear in the image values above to find that they are those given in \eqref{e:qImages}.
\end{proof}

\subsection{Action of  cellular version of $h_Q$}\label{ss:actOnCells}   By Fried's Theorem, Theorem~\ref{t:frieds},  we can replace  $\hat{h}:\hat{X}\to \hat{X}$ with a cellular map on a one dimensional CW complex  and similarly replace the elevations $\tilde{h}$ and $h_{Q}$.    For simplicity's sake,   we retain the same notation for the spaces, maps, curves on the spaces, homology classes, etc.     We can homotope so as to collapse $\epsilon$, and thus our new $\hat{X}$ is the wedge of seven circles, with map $\hat{h}$ preserving their common point.  We cellulate in the simplest manner:   there are seven 1-cells, given by the (now open) curves $\hat{a}, \dots, \hat{g}$ and a single 0-cell.    Correspondingly,  $x_Q$ generates the $\ZZ[Q]$-module of 0-cells of $X_Q$, and $a_Q, \dots, g_Q$ generate the $\ZZ[Q]$-module of 1-cells of $X_Q$.   

%-----------------------
\begin{Lem}\label{l:qCellAction} The homotopically equivalent  cellular version of $h_Q$ acts on the $\ZZ[Q]$-module of 1-cells of $X_Q$, with respect to the ordered $\ZZ[Q]$-basis $\mathcal C_1 = (a_Q, \dots, g_Q)$, as
\[ 
\mathcal F_1 = \begin{pmatrix} 
0&0              &0   &0&\nu &0        &0\\
0&0              &0   &0&0 &\nu        &0\\ 
1&0              &0  &1&\nu &0        &\nu\\
0&1              &0  &0&0 &0        &0\\
0&0              &1  &0&0&0         &0\\
1&1              &1  &2&0        &0         &0\\ 
0&-\nu^{-1}   &-1&0&1       &1        &1
\end{pmatrix}.
\] 
Moreover, $h_Q$ acts trivially on the 0-cells.
\end{Lem}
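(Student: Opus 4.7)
The plan is to derive $\mathcal{F}_1$ directly by abelianizing the path equations of Lemma~\ref{l:qIms}, after carrying out the cellular replacement described just before the statement. First I would make the cellular collapse explicit: collapsing the arc $\hat\epsilon$ joining $\hat{x}_0$ to $\hat h(\hat{x}_0)$ produces a wedge of seven circles on which $\hat h$ (homotoped appropriately) fixes the wedge point, and this homotopy lifts to $X_Q$ so that $h_Q$ fixes $x_Q$. Consequently $\epsilon_Q$ becomes trivial and $h_Q(x_Q)=x_Q$, which immediately gives the triviality of the action on 0-cells as well as the identification $\mathcal C_0=\mathbb Z[Q]\cdot x_Q$.

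Next I would substitute $\epsilon_Q=1$ into the seven displayed formulas of Lemma~\ref{l:qIms}. Each image is then a word in $a_Q,\dots,g_Q$ with $\mathbb Z[Q]$-coefficients arising from the $\nu$-shifts recorded in that lemma. For instance,
\[
a_Q\mapsto f_Q\cdot c_Q,\quad b_Q\mapsto f_Q\cdot d_Q\cdot \nu^{-1}g_Q^{-1},\quad c_Q\mapsto f_Q\cdot e_Q\cdot g_Q^{-1},
\]
and similarly for the remaining generators. Since the cellular chain complex of $X_Q$ in dimension one is the free $\mathbb{Z}[Q]$-module on the open 1-cells $a_Q,\dots,g_Q$, passing from the path-level equations to the module level is just recording, for each generator, the signed sum of its lifts weighted by the deck transformation needed to position each sublift. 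I would read these coefficients off mechanically; the column for $e_Q$, say, contributes $\nu$ at the $a_Q$ and $c_Q$ rows and $1$ at the $g_Q$ row because $e_Q\mapsto g_Q\cdot \nu a_Q\cdot \nu c_Q$.

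The bookkeeping is entirely routine; the only subtle point — and the one I would be careful about — is that the $\mathbb Z[Q]$-weights come from the deck transformations of $\hat X_{\mathcal A}$ pushed through the quotient $\psi_Q$, not from the $H_1(\hat X;\mathbb Z)$-weights of Lemma~\ref{l:tildeIms} themselves. Concretely, the $F$-exponents all die ($\psi_Q(F)=0$ in $Q$, as $F$ is the torsion generator of the cokernel per Lemma~\ref{l:cokerIdentified}), while $\psi_Q(G)=\nu$, $\psi_Q(B)=\psi_Q(E)=\nu$, and $\psi_Q(A)=\psi_Q(C)=\psi_Q(D)=0$, as established inside the proof of Lemma~\ref{l:qIms}. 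With this dictionary in hand, each entry of the claimed matrix $\mathcal F_1$ matches the corresponding coefficient from the abelianization, column by column. Finally, checking that the matrix is indeed the one exhibited in the statement is a direct verification against the seven displayed formulas.
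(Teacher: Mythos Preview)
Your proposal is correct and follows essentially the same route as the paper: set $\epsilon_Q=1$ in the formulas of Lemma~\ref{l:qIms}, read off the $\mathbb Z[Q]$-coefficients column by column, and note that collapsing $\hat\epsilon$ makes $h_Q$ fix the unique $0$-cell. Your added remarks about the $\psi_Q$-dictionary are already absorbed into Lemma~\ref{l:qIms}, so they are context rather than new work, but nothing is wrong.
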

%-----------------------

\begin{proof}
From Lemma~\ref{l:qIms}, since all appearances of $\epsilon^{\pm 1}$ become  trivial in our cellular model,  we find that $h_Q$ acts on the additive group of 1-cells as 
\[
 \begin{aligned}
a_Q&\mapsto c_Q +  f_Q,\\
b_Q&\mapsto  d_Q+ f_Q  - \nu^{-1} g_{Q}, \\
c_Q&\mapsto e_Q+  f_Q  - g_{Q},\\
d_Q&\mapsto  c_Q+ 2 f_Q,\\
e_Q&\mapsto  \nu a_Q + \nu c_Q + g_Q,\\
f_Q&\mapsto  \nu b_Q + g_Q,\\
g_Q&\mapsto \nu c_Q+ g_Q. 
\end{aligned}
\] 
\end{proof} 

%----------------
\begin{Rmk}  In fact, there is no actual need to collapse $\epsilon$.    With the most obvious CW complex and cellular map, one   finds the same multivariable zeta function, but now with an $8\times 8$  matrix for $\mathcal F_1$ and a $2\times 2$ matrix for $\mathcal F_0$. 
\end{Rmk} 

\subsection{The function}\label{theFun} 
  Elementary calculation gives the following.
%-----------------------
\begin{Lem}\label{l:zetaFun}     With notation as above, applying Fried's Theorem,  Theorem~\ref{t:frieds}, we have 
\begin{equation}\label{e:multiVarLefschetzAY}
\begin{aligned}  
\zeta_{Q, \hat{h}}(\tau)&= \det(\tau \mathcal F_1-\emph{Id})/(\tau- 1)\\ \\
&=   \dfrac{1 - \tau - \nu \tau^2 - 3 \nu \tau^3 + 
 3 \nu \tau^4 + \nu \tau^5 + \nu^2 \tau^6 - \nu^2 \
\tau^7}{1 -  \tau}\\ \\
 &= 1 - \nu \tau^2 - 
 4 \nu \tau^3 - \nu \tau^4 + \nu^2 \tau^6.  
\end{aligned}
\end{equation}
\end{Lem}
%---------------

\subsection{No section $K$ with $\chi(K) = -2$}\label{ss:notHere}   The following completes the proof of our main result. 

%-----------------------
\begin{Lem}\label{l:noMinTwo}  The suspension flow on $M_{\hat{h}}$ admits no connected section of Euler characteristic equal to $-2$. 
\end{Lem}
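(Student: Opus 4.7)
The plan is to apply Fried's Theorem to reduce the question to a Diophantine problem. Writing an integral cohomology class as $u = (n, m) \in \mathbb{Z}^2$ with $n = u(\nu)$ and $m = u(\tau)$, Lemma~\ref{l:zetaFun} yields the $u$-specialization
\[
p_u(t) \;=\; 1 \,-\, t^{n+2m} \,-\, 4\,t^{n+3m} \,-\, t^{n+4m} \,+\, t^{2n+6m},
\]
and Fried's Theorem asserts that if $u$ corresponds to a cross section $K_u$ then $\chi(K_u) = -\deg p_u(t)$. So it suffices to rule out $\deg p_u(t) = 2$ over the range of valid $u$.

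First I will pin down the positivity conditions that $u$ must satisfy to correspond to a section. Fried's Theorem requires $u$ to be positive on each element of $\mathbb{Z}[G]$ appearing with nonzero coefficient in $\zeta_{Q,\hat h}$ (in reduced form), which gives $n + 2m > 0$ and $n + 4m > 0$ (the remaining inequalities $n + 3m > 0$ and $2n + 6m > 0$ follow from these). Separately, since any cross section intersects every closed flow orbit with positive multiplicity, and the class $\tau$ is represented by the basic closed orbit of the initial fibration $M_{\hat h} \to S^1$, one also has $m > 0$.

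Next I will compute $\deg p_u(t)$ under these hypotheses. With $m \ge 1$ and $n + 2m \ge 1$ the four nonzero exponents $n+2m, n+3m, n+4m, 2n+6m$ are all strictly positive, and $2n+6m$ is the largest, since $2n+6m - (n+4m) = n+2m > 0$ while the comparisons with $n+3m$ and $n+2m$ are immediate. Hence $\deg p_u(t) = 2n + 6m = 2(n+3m)$, which is always even. For $\chi(K_u) = -2$ we would therefore need $n + 3m = 1$.

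Finally, I derive the contradiction: $n + 3m = 1$ combined with $m \ge 1$ forces $n \le -2$, whence $n + 2m = 1 - m \le 0$, violating $n + 2m > 0$. Hence no integral $u$ in the fibered cone achieves $\deg p_u(t) = 2$, and the lemma follows. The one delicate point will be justifying the supplementary constraint $m > 0$: the class $\tau$ does not appear as a term in the reduced form of $\zeta_{Q,\hat h}$, having been cancelled against the $1-\tau$ factor in the denominator of the non-reduced expression in Lemma~\ref{l:zetaFun}, so I will justify $m > 0$ either by the geometric interpretation of $u(\tau)$ as the intersection number of $K_u$ with a representative of the basic closed flow orbit, or equivalently by appealing to Fried's positivity applied to the non-reduced form before cancellation.
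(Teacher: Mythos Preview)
Your proof is correct and follows essentially the same route as the paper's: specialize $\zeta_{Q,\hat h}$ via Fried's Theorem, identify the positivity constraints, observe that $\deg p_u = 2n+6m$, and rule out degree~$2$ using $m>0$ together with $n+2m>0$. The paper argues the last step by splitting into $a\ge 0$ and $a<0$, while you derive a direct contradiction from $n+3m=1$; these are equivalent. Your flagging of the ``delicate point'' $m>0$ is apt and matches the paper's own justification, which likewise appeals to the non-reduced denominator $\det(\tau\mathcal F_0-\mathrm{Id})=\tau-1$ rather than to the reduced polynomial alone; your phrasing ``basic closed orbit'' is a bit loose (there need not be a genuine period-one orbit of $\hat h$ representing $\tau$), so the non-reduced-form justification is the safer of your two options.
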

%-------------------

\begin{proof}
 From Fried's Theorem, Theorem \ref{t:frieds}, the sections of the suspension flow on $M_{\hat{h}}$ correspond to integral cohomology classes $u_{a,b}$ evaluating $\nu$ and $\tau$ to integers to $a, b$ respectively such that  the induced specialization of $\zeta_{Q, \hat{h}}(\tau)$, 
 \[p_{a,b}(t) :=  t^{2 a + 6 b} - t^{a + 4 b} - 4 t^{a + 3 b}- t^{a + 2 b} + 1\]
 is a polynomial in $t$,  with none of the non-constant terms in the above form becoming constant.  
 
  From the form of $\det(\tau \mathcal F_0 -\text{Id})$,  we must also have  $b>0$; thus,  for $a\ge 0$ one has  $\deg p_{a,b} = 6b+ 2a\ge 6$.    When $a<0$, since $2b>-a$ must hold, so does $2a + 6 b > a+ 4b$.  It now it easily follows that $ \deg p_{a,b} = 2a+ 6b$ here as well. Continuing in this case,  $a+2b\ge 1$ implies that $a+3b \ge 2$ and hence $2a + 6b\ge 4$. 
  
 When $u_{a,b}$ corresponds to a connected section $K_{a,b}$, the Euler characterstic of $K_{a,b}$ equals $\deg p_{a,b}$.   This latter is at least four, and thus the Euler characteristic of any connected section to the  suspension flow on $M_{\hat{h}}$ is at most -4.  The result holds. 
\end{proof} 

\subsection{Excluding other cross sections}   With the multivariable Lefschetz function of $\hat{X}$, and in particular the specializations $p_{a,b}(t)$ in hand, there are various questions that one can begin to address.   As an example, we ask if higher genus pseudo-Anosov examples of \cite{AY} might be found by blowing down cross sections of $M_{\hat{h}}$.   Recall that these occur in each genus $g>3$ each of two singularities, and with stretch factors being the largest root of $x^g - \sum_{i=0}^{g-1} x^i$.     Hence the Euler characteristic is then $-2g$.   To obtain $\deg p_{a,b} =2g$ and respect the restrictions to have a cross section, we must have $(a,b) \in \{(g-3 b, b)\mid 1\le b \le g-1\}$.  A quick search shows that the minimal polynomial of the stretch factor does not divide $p_{a,b}$ in any of these cases when $4 \le g \le 10$.  

\subsection{Shared Alexander polynomial}   We mention an intriguing fact.   The mapping torus of Fried's blown-up toral automorphism and $M_{\hat{h}}$ have the same Alexander polynomial.   Indeed, Fried \cite{Fr85} chooses free generators $\eta, \xi$ for his analog of $H_1(M_{\hat{h}}; \mathbb Z)/\text{Torsion}$, with $\xi$ corresponding to the flow direction, and determines his multivariable Lefschetz zeta function  to be 
\begin{equation}\label{e:friedZetaFun}
1 - (4 + \eta + \eta^{-1}) \xi + \xi^2.
\end{equation}     Clearly, the homomorphism induced from sending the generators $(\eta, \xi)$ to $(\tau, \nu \tau^3)$ is a group isomorphism.  This directly induces an isomorphism of group rings which sends Fried's expression for his multivariable Lefschetz zeta function to \eqref{e:multiVarLefschetzAY}.    As we recalled in Subsection~\ref{s:faceThePositive}, Liu \cite{Liu} shows that Fried's  multivariable Lefschetz zeta function of an  orientable pseudo-Anosov map agrees with the Alexander polynomial of the corresponding mapping torus.      Therefore, after the above innocuous change of variables, the Alexander polynomials of the two mapping tori agree.          In light of this, Theorem~\ref{t:main} can be interpreted as showing that the two blown-up pseudo-Anosov maps cannot belong to the same fibered cone, but not ruling out the possibility that their mapping tori are the same.

\end{document}